\numberwithin{equation}{section}
\newtheorem{theorem}{Theorem}[section]
\newtheorem{proposition}[theorem]{Proposition}
\newtheorem{lemma}[theorem]{Lemma}
\newtheorem*{fact*}{Fact}
{
\theoremstyle{definition}
\newtheorem{definition}[theorem]{Definition}
\newtheorem{example}[theorem]{Example}
\newtheorem{remark}[theorem]{Remark}
\newtheorem{claim}[theorem]{Claim}
}
\title{Surfaces in the 4-ball constructed using generators of knits and their graphical description}
\author{Inasa Nakamura}
\address{Department of Mathematics, Information Science and  Engineering, \newline
Saga University, 
1 Honjomachi, Saga, 840-8502, Japan \newline
TEL: +81-952-28-8113}
\email{inasa@cc.saga-u.ac.jp}
\author{Jumpei Yasuda}
\address{Department of Mathematics, Graduate School of Science, \newline Osaka University, 1-1, Machikaneyama, Toyonaka, Osaka 560-0043, Japan}
\email{u444951d@ecs.osaka-u.ac.jp}
\subjclass[2020]{Primary: 57K45, Secondary: 57Q35 57K10}
\keywords{surface in 4-space; surface braid; braided surface; tangle; knit; plat closure}
\begin{document}
\begin{abstract}
We introduce a new construction of surfaces in $D^2 \times B^2$, called knitted surfaces or BMW surfaces, which are described as the trace of deformations of knits. Here, knits are tangles obtained from classical braids from splicing at some crossings. Knitted surfaces are a generalization of braided surfaces. Further, we generalize charts of braided surfaces to BMW charts of knitted surfaces, which are finite graphs in $B^2$, and we show that a knitted surface has a BMW chart description.    
We show that every compact surface with non-empty boundaries properly embedded in $D^2 \times B^2$ is ambiently isotopic to some knitted surface: so such surfaces are described by BMW charts.

\end{abstract}
\maketitle

\section{Introduction}\label{sec1}
In this paper, surfaces are compact and tangles/surfaces are smoothly embedded. 
Let $I=[0,1]$, and let $n$ be a positive integer.
A (simple) {\it braided surface}
\cite{CKS,Kamada02} of degree $n$ is a surface $S$ properly embedded in $D^2 \times B^2$, where $B^2=I_3 \times I_4$  ($I_3=I_4=I$), such that the restriction $\pi|_S: S \to B^2$ is a (simple) branched covering map, where $\pi: D^2 \times B^2 \to B^2$ is the projection.
For $p: D^2 \times I_3 \times I_4 \to D^2 \times I_3$ and $t \in I_4$, the slice $S_{[t]}=p(S\cap (D^2 \times I_3 \times \{t\}))$ of a braided surface $S$ at $t$ is either a braid or a singular braid. The one-parameter family $\{S_{[t]}\}_{t \in I}$ gives deformations of a braid.
A braided surface is described by a finite graph called a {\it chart} in $B^2=I_3 \times I_4$, which is obtained as the \lq\lq trace'' of deformations of a braid word.

In this paper, we introduce a new construction of surfaces in $D^2 \times B^2$, called knitted surfaces or BMW surfaces, which are described as the trace of deformations of tangles called knits or BMW tangles. A knit of degree $n$ (or simply an $n$-knit) is a  tangle obtained from a braid of degree $n$ by splicing at some crossings. An $n$-knit is also called a BMW tangle, because $n$-knits generate the BMW  (Birman-Murakami-Wenzl) algebra \cite{Birman-Wenzl, Morton, Murakami}. 
The equivalence classes of $n$-knits form a monoid consisting of two types of generators $\sigma_i, \sigma_i^{-1}$ and $\tau_i$, where $\sigma_i$ and $\sigma_i^{-1}$ are a standard generator of the braid group $B_n$ and its inverse, and the additional generator $\tau_i$ is a pair of hooks between the $i$th and $(i+1)$th boundary points of the tangle as shown in Figure \ref{figure1}.

One of our main results is the following theorem.

\newtheorem*{MainTheorem}{Theorem~\ref{Theorem: Alexander theorem for surfaces in 4-ball}}
\begin{MainTheorem} 
    Every compact surface with non-empty boundaries properly embedded in $D^2 \times B^2$ is isotopic to some knitted surface. 
\end{MainTheorem}

\begin{figure}[ht]
\includegraphics*[height=3.5cm]{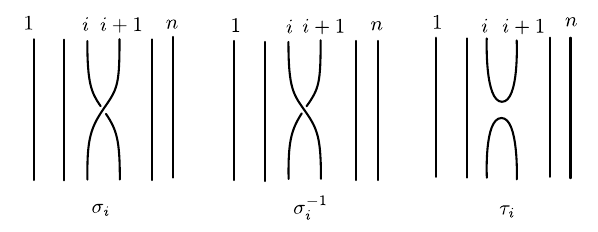}
\caption{Generators $\sigma_i$, $\sigma^{-1}$ and $\tau_i$ of the monoid $D_n$ of $n$-knits, where $i \in \{1,\ldots, n-1\}$. 
}\label{figure1}
\end{figure}

We introduce the notion of a graphical description called BMW charts or simply charts which present knitted surfaces (Definition \ref{def5-5}). A BMW chart is a finite graph in $B^2$, which is a generalization of Kamada's charts for simple braided surfaces \cite{Kamada02}--\cite{Kamada92}. 
We give a method of constructing a knitted surface $S(\Gamma)$ from a given BMW chart $\Gamma$.

\newtheorem*{MainTheorem2}{Theorem~\ref{thm4-6}}
\begin{MainTheorem2}
A knitted surface has a BMW chart description.
More precisely, for a knitted surface $S$, there exists a BMW chart $\Gamma$ such that $S$ and $S(\Gamma)$ are equivalent.
\end{MainTheorem2}

Thus, surfaces with non-empty boundaries can be investigated through BMW charts.
We define chart moves or C-moves consisting of T, CI, CII, CIII-moves, which are local modifications for charts (Definition \ref{def6-2}). 
We give a set of explicit CI-moves (Definition \ref{c1moves}), and we show the following.
\newtheorem*{MainTheorem3}{Theorem~\ref{thm7-3}}
\begin{MainTheorem3}
Two knitted surfaces of degree $n$ are equivalent 
if their presenting charts are related 
by a finite number of C-moves consisting of T, CII, CIII-moves and 
CI-moves of type (A1)--(F3). 
\end{MainTheorem3}
Further we consider knitted surfaces of degree 2, and we describe trivial surface-knots by charts of degree 2.

The paper is organized as follows.
In Section \ref{sec-knitted-surface}, we give the definition of knits and knitted surfaces.
In Section \ref{sec2}, we review the motion picture method by which we describe knitted surfaces.
 In Section \ref{sec5}, we give BMW charts. A knitted surface is presented by a BMW chart (Theorem \ref{thm4-6}). In Section \ref{sec6-0}, we show Theorem \ref{thm4-6}.
In Section \ref{sec-Alexander}, we prove Theorems \ref{Theorem: Alexander theorem for surfaces in 4-ball}. 
 In Section \ref{sec7-1}, we give local moves of BMW charts called BMW chart moves, 
 and we show Theorem \ref{thm7-3}.
In Section \ref{sec8}, we consider knitted surfaces of degree 2 and we give a BMW chart presentation of trivial surface-knots.
 We close the paper with further results which will be shown in our future papers in Section \ref{sec9}. 
 We give references \cite{CKS, Kamada02, Kawauchi} for the basics of 1-dimensional and 2-dimensional knot theory.

\section{Definition of knitted surfaces}\label{sec-knitted-surface}
In this section, we give a definition of knitted surfaces. In Section \ref{knit-str-tangle}, we give knits and knit structures, using the notion of pairings. In Section \ref{subsec-knitted-surface}, as an analogue to knits and knit structures, we give knitted surfaces and knit structures.
In this paper, we use notations as follow.
\begin{itemize}
  \item $I$: the interval $[0,1]$.
  \vspace{0.1cm}
    \item $D^2$: a 2-disk.
    \vspace{0.1cm}

    \item $B^2 = I_3 \times I_4$: the square, where $I_3 = I_4 = I$.
    \vspace{0.1cm}
    \item $n$: a positive integer.
    \vspace{0.1cm}
    \item $Q_n$: the fixed set consisting of $n$ interior points of $D^2$.
    \vspace{0.1cm}
    \item $y_0 = (0,0) \in \partial B^2$: the base point.
    \vspace{0.1cm}
    \item
    $p: D^2 \times I_3 \times I_4 \to D^2 \times I_3$: the projection.
    \vspace{0.1cm}
 \item
$S_{[t]}=p(S\cap D^2 \times I_3 \times \{t\})$: the slice of a surface $S$ in $D^2 \times B^2$ at $t$.
\vspace{0.1cm}
\item
$\displaystyle \cup \mathbf{X}=X_1 \cup X_2 \cup \cdots \cup X_m$: the union of elements for
$\mathbf{X}=\{X_1, \ldots, X_m\}$.
\end{itemize}

\subsection{Knits and knit structures}\label{knit-str-tangle}

Let $\beta$ be a tangle in $D^2\times I$, that is, a 1-manifold properly embedded in $D^2\times I$.
Let $\pi: D^2\times I \to I$ be the projection.
We consider $\pi|_{\beta}: \beta \to I$, the restriction map of $\pi$ by $\beta$.
A point $a \in \beta$ is called \textit{fold singular point} if $\pi|_{\beta}$ is the map sending $t$ to $t^2$ in appropriate coordinates around $a$ and $\pi(a)$.
Any tangle is deformed by slight perturbations so that $\pi|_{\beta}$ is non-degenerate and its critical points consist of fold singular points.

\begin{definition}
    Let $l$ be a simple arc embedded in $D^2\times I$ which intersects with $\beta$ transversally.
    Then, $l$ is called a \textit{pairing} of $\beta$ if it satisfies the following conditions.
    \begin{enumerate}
        \item We have $l \cap \beta = \partial l \cap \mathrm{Crit}(\beta) = \partial l$.
        \item The restriction map $\pi|_l: l \to I$ has no critical points.
        \item For each $a \in \partial l$, there exist local coordinates $(U; x)$ and $(V; t)$ around $a \in \beta$ and $\pi(a)\in I$ respectively,  such that $a=0\in U$, $\pi|_\beta(x) = x^2$ for any $x \in U$ and $\pi(l) \cap V = \{ t \in V \mid t\leq 0\}$.
    \end{enumerate}
\end{definition}

\begin{definition}
    Let $\beta$ be a tangle with non-degenerate $\pi|_\beta$.
    A \textit{knit structure} on $\beta$ is a set $\ell = \{ l_1, \dots, l_q\}$ of mutually disjoint pairings of $\beta$ such that all fold singular points are contained in $\cup \ell= l_1 \cup \dots \cup l_q$.
  When $\beta$ satisfies $\partial \beta = Q_n \times \{0, 1\}$ and  admits a knit structure $\ell$, we call the pair $(\beta, \ell)$ an \textit{$n$-knit}.
We remark that we include the case $\ell=\emptyset$.
\end{definition}

We remark that for a knit $(\beta, \ell)$, $\beta$ is a braid if and only if $\ell$ is the empty set.
For a knit structure $\ell = \{l_1, \ldots, l_q\}$ on $\beta$ and a self-homeomorphism $\varphi$ of $D^2 \times I$, we put $\varphi(\ell) = \{ \varphi(l_1), \ldots, \varphi(l_q)\}$ and $\varphi(\beta, \ell) = (\varphi(\beta), \varphi(\ell))$.

\begin{definition}
    Two $n$-knits $(\beta_0, \ell_0)$ and $(\beta_1, \ell_1)$ are said to be \textit{equivalent} if there exists an isotopy $\{\varphi_t\}_{t\in [0,1]}$ of $D^2\times I$ rel $\partial (D^2 \times I)$ carrying $\beta_0$ to $\beta_1$ such that $\varphi_1(\ell_0) = \ell_1$ and $\varphi_t(\beta_0, \ell_0)$ is an $n$-knit for any $t \in [0,1]$.
\end{definition}

We remark that when $(\beta_0, \ell_0)$ and $(\beta_1, \ell_1)$ are $n$-braids, then they are equivalent as knits if and only if they are equivalent as braids.
In this paper, we write an $n$-knit $(\beta, \ell)$ simply $\beta$ when the knit structure is clear.

Let $D_n$ be the set of equivalent classes of $n$-knits.
$D_n$ is a monoid by the multiplication induced from the composition of knits, which is called the \textit{knit monoid} of degree $n$.
Since an $n$-braid is an $n$-knit, $D_n$ contains the braid group $B_n$ as a submonoid.
When we consider in the framework of $n$-knits, we call $(Q_n \times I, \emptyset)$, the trivial $n$-braid, the {\it trivial $n$-knit}.

\begin{proposition}\label{Proposition: Presentation of Knit monoid}
    The knit monoid $D_n$ is generated by $3(n-1)$ elements, $\sigma_1, \ldots, \sigma_{n-1}$, $\sigma_1^{-1}, \ldots, \sigma_{n-1}^{-1}$, $\tau_1,\ldots, \tau_{n-1}$ with the following relations:
    \begin{align}
        & \sigma_i \sigma_{i}^{-1} = \sigma_i^{-1} \sigma_i = e, &  \label{eq2-1}\\
        & \sigma_i \tau_i = \tau_i \sigma_i = \tau_i,  & \label{eq2-2}\\
        & \sigma_i \tau_k = \tau_k \sigma_i, & (|i-k|>1), \label{eq2-3}\\
        & \tau_i \tau_k = \tau_k \tau_i, & (|i-k|>1), \label{eq2-4}\\
        & \sigma_i\sigma_k = \sigma_k \sigma_i, & (|i-k|>1), \label{eq2-5}\\
        & \sigma_i\sigma_j\sigma_i = \sigma_j\sigma_i\sigma_j, & (|i-j|=1), \label{eq2-6}\\
        & \sigma_i \sigma_j \tau_i = \tau_j \sigma_i \sigma_j, & (|i-j|=1). \label{eq2-7}
    \end{align}
    Here, $e \in D_n$ denotes the unit element represented by the trivial knit (trivial braid) $Q_n \times I$: $D_n$ has the monoid presentation
    \begin{align*}\label{knit-monoid}
        \left\langle
        \begin{array}{c|}
            \sigma_1, \ldots, \sigma_{n-1},\\ \sigma_1^{-1}, \ldots, \sigma_{n-1}^{-1},\\ \tau_1,\ldots, \tau_{n-1}
        \end{array}
        \begin{array}{l}
            \sigma_i \sigma_{i}^{-1} = \sigma_i^{-1} \sigma_i = e,~
            \sigma_i \tau_i = \tau_i \sigma_i = \tau_i,\\
            \sigma_i\sigma_k = \sigma_k \sigma_i,~
            \sigma_i \tau_k = \tau_k \sigma_i,~
            \tau_i \tau_k = \tau_k \tau_i \ \ (|i-k|>1), \\
            \sigma_i\sigma_j\sigma_i = \sigma_j\sigma_i\sigma_j,~
            \sigma_i \sigma_j \tau_i = \tau_j \sigma_i \sigma_j \ \ (|i-j|=1)
        \end{array}
        \right\rangle_{\mathrm{monoid}}.
    \end{align*}
\end{proposition}

\begin{proof}
We consider a singular braid, and we denote by $\dot{\sigma_i}$ the singular braid obtained from $\sigma_i$ by changing the crossing to the self-intersection point.
By changing $\dot{\sigma_i}$ to the hook pair $\tau_i$,
we see that the set of equivalence classes of singular $n$-braids is mapped onto that of $n$-knits. Hence, the monoid presentation of the singular $n$-braid monoid $SB_n$ \cite{Birman} induces the monoid generators and relations (\ref{eq2-1}) and (\ref{eq2-3})--(\ref{eq2-7}) of $D_n$.
By deforming each pairing of a knit to one point by isotopies, we obtain a singular braid.
This operation is unique up to the rotation of a hook around its pairing; so we have the monoid relation (\ref{eq2-2}), and relations (\ref{eq2-1})--(\ref{eq2-7}) generate the knit monoid $D_n$.
\end{proof}

\subsection{Knitted surfaces}\label{subsec-knitted-surface}

Let $S$ be a surface properly embedded in $D^2 \times B^2$.
Let $\pi: D^2 \times B^2 \to B^2$ be the projection, and let $\pi|_S: S \to B^2$ be the restriction map of $\pi$.
A point $a \in S$ is called a \textit{fold singular point} of $\pi|_S$ if there exist local coordinates $(U; x,y)$ and $(V; s,t)$ around $a\in S$ and $\pi(a) \in B^2$, respectively, such that $a = (0,0) \in U$, $V=\pi(U)$ and $\pi|_S(x,y)=(x, y^2)$ for any $(x,y) \in U$.
We denote by $\mathrm{Fold}(S)$ the set of fold singular points of $\pi|_S$.
A point $a \in S$ is called a \textit{branch point} of $\pi|_S$ if $\pi|_S$ sends $(x, y)\in U$ to $(x^2-y^2, 2xy) \in V$ for appropriate coordinates $(U; x,y)$ and $(V; s,t)$ around $a=(0,0)\in U $ and $\pi(a) \in V$.

\begin{definition}
Let $S$ be a surface properly embedded in $D^2 \times B^2$.
    Let $L$ be an orientable surface embedded in $D^2 \times B^2$ which intersects with $S$ transversally.
    Then $L$ is called a \textit{pairing} of $S$ if it satisfies the following.
    \begin{enumerate}
        \item The intersection $L \cap S $ satisfies $L \cap S=\partial L \cap \mathrm{Fold}(S)$, which is a non-empty set, and $\partial L \setminus \mathrm{Fold}(S)$ consists of simple open arcs in $D^2\times \partial B^2$.
        \item The restriction map $\pi|_L: L \to B^2$ has no critical points.
        \item For each $a \in \mathrm{Fold}(S)$, there exist local coordinates $(U; x,y)$ and $(V; s,t)$ around $a\in S$ and $\pi(a) \in B^2$, respectively, such that $a = (0,0) \in U$, $V=\pi(U)$, \[
      V=   \begin{cases}  \{(s,t) \mid -1 < s,t < 1\} & (\text{if \ } \pi(a) \notin \partial B^2)\\
          \{(s,t) \mid 0 \leq s<1, -1 < t < 1\} &(\text{if \ }\pi(a) \in \partial B^2)\\
            \end{cases}
        \]
      and $\pi|_{S}(x,y) = (x,y^2)$ for any $(x,y) \in U$ and $\pi(L \cap U) ~=~
                \{(s,t)\in V  ~|~ t \leq 0\}$.
    \end{enumerate}
\end{definition}

\begin{definition}
Let $S$ be a surface properly embedded in $D^2 \times B^2$ with non-degenerate $\pi|_S$ such that the singular points consist of fold singular points and branch points. A \textit{knit structure} on $S$ is a set $\mathcal{L} = \{L_1, \ldots, L_q\}$ of mutually disjoint pairings
     of $S$ such that $(\cup\mathcal{L}) \cap (D^2 \times \{y_0\}) = \emptyset$, and each fold singular point is contained in one of $\cup\mathcal{L}$, where
    $\cup\mathcal{L} = L_1 \cup \cdots \cup L_q$. When $S$ satisfies $S \cap (D^2\times \{y_0\}) = Q_n \times \{y_0\}$ and admits a knit structure $\mathcal{L}$, we call the pair $(S, \mathcal{L})$
    a \textit{knitted surface} of degree $n$. We remark that we include the case $\mathcal{L}=\emptyset$.
\end{definition}

\begin{definition}
    Two knitted surfaces $(S_0, \mathcal{L}_0)$ and $(S_1, \mathcal{L}_1)$ are said to be \textit{equivalent} if there is an isotopy $\{\Phi_t\}_{t\in[0,1]}$ of $D^2\times B^2$ rel $D^2\times \{y_0\}$, carrying $S_0$ to $S_1$ such that
         $(S_t, h_t) = (\Phi_t(S_0), \Phi_t(\mathcal{L}_0))$ is a knitted surface for any $t \in [0,1]$.
\end{definition}

We remark that a knitted surface $(S, \emptyset)$ of degree $n$ is a simple braided surface of degree $n$ \cite{CKS}.

 \begin{remark}
 The notion of a knit was introduced by Murakami \cite{Murakami}. The original notion of a knit in $D^2 \times I$ is not equipped with a knit structure: two $n$-knits $\beta$ and $\beta'$ are defined to be equivalent if they are isotopic rel $\partial$. With this equivalence, a knit is also called a {\it BMW tangle}, and the generators of the knit monoid generate the BMW (Birman-Murakami-Wenzl) algebra \cite{Birman-Wenzl, Morton}. Hence a knit $(\beta, l)$ with a knit structure will be also called a {\it BMW tangle}, and a knitted surface will be also called a {\it BMW surface}.
 \end{remark}

\section{Description of surfaces in the 4-ball}\label{sec2}
In this section, we review basic tools to treat surfaces in 4-space \cite{CKS, Kamada02}.
 
\subsection{Band surgery of a tangle along a band set}
Let $\beta$ be a tangle in $D^2 \times I$.
Let $B$ be a 2-disk $B$ in $D^2 \times I$ such that $B \cap \beta$
consists of a pair of intervals in $\partial B$. We call $B$ a {\it band} attaching to $\beta$. The result of {\it band surgery} along $B$ is the tangle
$\mathrm{Cl}((\beta\cup \partial B)\backslash \beta \cap B)$, denoted by $h(\beta; B)$. Here, since tangles are smoothly embedded, by a slight perturbation we assume that $h(\beta; B)$ is also smoothly embedded.

Let $\mathcal{B}=\{B_1, \ldots, B_m\}$ be a set of mutually disjoint bands attaching to $\beta$. Then we denote by $h(\beta; \mathcal{B})$ the result of band surgery along bands of $\mathcal{B}$:
\[
h(\beta, \mathcal{B})=\mathrm{Cl}((\beta\cup \partial (\cup \mathcal{B}))\backslash \beta \cap (\cup \mathcal{B})),
\]
where $\cup \mathcal{B}=B_1 \cup B_2 \cup \cdots \cup B_m$.

\subsection{Horizontal product of surfaces}
Let $S$ be a surface properly embedded in $D^2  \times B^2$, and we denote $B^2=I_3 \times I_4$, where $I_3=I_4=[0,1]$. Let $p: D^2 \times I_3 \times I_4 \to D^2 \times I_3$ be the projection.
We call the image by $p$ of the intersection $S \cap (D^2 \times I_3 \times \{t\})$ ($t \in I_4$) the {\it slice} of $S$ {\it at $t$}, denoted by $S_{[t]}$:
\[
S_{[t]}=p(S\cap D^2 \times I_3 \times \{t\}).
\]
We call $t$ the {\it $I_4$-coordinate} of the slice $S_{[t]}$.

Let $S_1$ and $S_2$ be  surfaces properly embedded in $D^2 \times B^2$ satisfying $(S_1)_{[1]}=(S_2)_{[2]}$.
The \textit{horizontal product} of $S_1$ and $S_2$, denoted by $S_1 \cdot S_2$, is the surface $S$ given by
\begin{align*}
    S_{[t]} ~=~ \begin{cases}
        (S_1)_{[2t]}  & (0\leq t \leq \frac{1}{2}),\\
        (S_2)_{[2t-1]}  & (\frac{1}{2} \leq t \leq 1).
    \end{cases}
    \end{align*}

   For surfaces $S_1, S_2, S_3$ which admit the horizontal products $(S_1 \cdot S_2) \cdot S_3$ and $S_1 \cdot (S_2 \cdot S_3)$, the horizontal products are isotopic. In particular, when the surfaces are knitted surfaces of degree $n$, $(S_1 \cdot S_2) \cdot S_3$ and $S_1 \cdot (S_2 \cdot S_3)$ are equivalent knitted surfaces. We denote by $S_1 \cdot S_2 \cdots S_m$ a horizontal product of surfaces $S_1, S_2, \ldots, S_m$.
\subsection{Motion picture method}
Let $S$ be a surface properly embedded in $D^2  \times B^2$. 
We call a point in $S$ a {\it saddle point} (respectively a {\it minimal/maximal point}) if it is a saddle point (respectively minimal/maximal point) of the function $\pi|_S: S \to I_4$, where $\pi: D^2 \times I_3 \times I_4 \to I_4$ is the projection. 
In this paper,
we modify saddle points and minimal/maximal points of $S$  into saddle bands and minimal/maximal disks, respectively;
see \cite[Section 8.4]{Kamada02}.

When we have an  isotopy $\{\beta_t\}_{t\in I}$ of tangles in $D^2 \times I_3$, the union $\cup_{t \in I} (\beta_t \times \{t\})$ forms a properly embedded surface $S$ in $D^2 \times I_3 \times I$ such that $S_{[t]}=\beta_t$ ($t \in I$). We call the constructed surface $S=\cup_{t \in I} (\beta_t \times \{t\})$ the {\it trace} of the isotopy of tangles $\{ \beta_t\}_{t\in I}$.
We also use the term \lq\lq trace'' for diagrams.
The diagram of $S$ is described as the trace of diagrams of slices of $S$.

When we describe $S$ in $D^2 \times B^2$ by figures, we use the {\it motion picture method}.
Let $\beta_i$ ($i=1,\ldots, m$) be a subset of $D^2 \times I_3$.
A {\it motion picture} is a finite sequence $\beta_0 \to \beta_1 \to \cdots \to \beta_m$ consisting of the following sequences.
\begin{enumerate}
\item
A sequence $\beta_{j-1} \to \beta_j$, where $\beta_{j-1}$ and $\beta_j$  are isotopic tangles in $D^2 \times I_3$. We often denote the sequence by $\beta_{j-1} \overset{\cong}{\rightarrow} \beta_j$.
\item
A sequence $\beta_{j-1} \to \beta_j \to \beta_{j+1}$, where
$\beta_{j-1}$ and $\beta_{j+1}$ are tangles, and
\begin{align*}
\beta_j& =\beta_{j\pm1} \cup ( \cup \mathcal{B}) \cup \mathbf{D}, 
\end{align*}
where $\mathcal{B}$ is a band set attaching to $\beta_{j\pm1}$ and $\mathbf{D}$ is a union of mutually disjoint 2-disks such that the intersection  $ \mathbf{D} \cap \beta_{j \pm1}$ consists of embedded circles in $\beta_{j \pm 1}$.
\end{enumerate}

We define the surface presented by a motion picture, as follows.
For the motion picture $\beta_{j-1} \to \beta_j$ of (1), let $\{\beta'_t\}_{t \in I}$ be an isotopy connecting $\beta_{j-1}$ and $\beta_j$: $\beta'_0=\beta_{j-1}$ and $\beta'_1=\beta_j$. Then, we define a surface $S$ {\it presented by $\beta_{j-1} \to \beta_j$} as the trace of $\{\beta'_t\}_{t \in I}$ :
\[
S_{[t]}=\beta'_t, \quad (t \in I).
\]
We remark that the presented surface is unique up to isotopy. When $\beta_{j-1}$ and $\beta_j$ are $n$-knits, we require that $\{\beta'_t\}_{t \in I}$ is an isotopy of $n$-knits; thus the presented surface is unique up to equivalence of knitted surfaces.

For the motion picture $\beta_{j-1} \to \beta_j \to \beta_{j+1}$ of (2), we define the surface {\it presented by $\beta_{j-1} \to \beta_j \to \beta_{j+1}$} as the surface $S$ given by:
\[
S_{[t]}=\begin{cases}
 \beta_{j-1}         & (0\leq t <\frac{1}{2}),\\
        \beta_j =\beta_{j\pm1} \cup ( \cup \mathcal{B}) \cup \mathbf{D},        & (t=\frac{1}{2}),\\
        \beta_{j+1}  & (\frac{1}{2}<t \leq 1).\\
\end{cases}
\]
For a motion picture $\beta_0 \to \beta_1 \to \cdots \to \beta_m$, we define the presented surface as a horizontal product of the surfaces presented by the sequences of (1) and (2).
 
 \begin{remark}
 We consider a motion picture $\beta_{j-1} \to \beta_j \to \beta_{j+1}$ of (2) such that $\beta_j$ is as follows.
 We denote by $S$ the surface presented by $\beta_{j-1} \to \beta_j \to \beta_{j+1}$.

\noindent
(Around a saddle band) The case $\beta_j =\beta_{j\pm1} \cup B$, where $B$ is a band attaching to $\beta_{j-1}$. Then, $B$ is a saddle band of $S$.
The surface $S$ is given by:
\[
S_{[t]}=\begin{cases}
 \beta_{j-1}         & (0 \leq t < \frac{1}{2}),\\
        \beta_{j-1} \cup B        & (t=\frac{1}{2}),\\
        h(\beta_{j-1}; B)  & (\frac{1}{2}<t \leq 1). \\
\end{cases}
\]

\noindent
(Around a maximal disk)
The slice $\beta_j$ is the union of $\beta_{j-1}$ and a disk bounded by a simple closed circle in $\beta_{j-1}$. In other words, the motion picture is given by the sequence  $\beta_{j-1} \to \beta_{j-1} \cup D \to \beta_{j-1}\backslash \partial D$, where $D$ is a maximal disk such that $\partial D$ is a component of $\beta_{j-1}$:
\[
S_{[t]}=\begin{cases}
 \beta_{j-1}         & (0 \leq t <\frac{1}{2}),\\
        \beta_{j-1} \cup D        & (t=\frac{1}{2}),\\
        \beta_{j-1} \backslash \partial D  & (\frac{1}{2}<t \leq 1).  \\
\end{cases}
\]

\noindent
(Around a minimal disk)
The slice $\beta_j$ is the union of $\beta_{j+1}$ and a disk bounded by a simple closed circle in $\beta_{j+1}$. In other words, the motion picture is given by the sequence  $\beta_{j-2} \to \beta_{j-2} \cup d \to \beta_{j-2}\cup \partial d$, where $d$ is a minimal disk such that $\partial d$ is a component of $\beta_{j+1}$:
\[
S_{[t]}=\begin{cases}
 \beta_{j-1}         & (0 \leq t < \frac{1}{2}),\\
        \beta_{j-1} \cup d        & (t=\frac{1}{2}),\\
        \beta_{j-1} \cup \partial d  & (\frac{1}{2}<t \leq 1). \\
\end{cases}
\]

\end{remark}

In this paper, for slices, we treat knits with/without bands and disks, and we describe geometric knits by words in the knit monoid.

\subsection{Cell moves}
    Let $S$ be an $n$-manifold embedded in $\mathbb{R}^{n+2}$.
    Let $C$ be a $(n+1)$-ball in $\mathbb{R}^{n+2}$ such that $S \cap \partial C$ is an $n$-disk in $S$. We call $S$ an {\it $n$-cell}. Let $S'$ be an $n$-manifold in $\mathbb{R}^{n+2}$ given by
    \[
    S'=\mathrm{Cl}((S \cup \partial C) \backslash (S \cap \partial C)).
    \]
    Here, since we treat smoothly embedded surfaces, by a slight perturbation we assume that $S'$ is also smoothly embedded.
     The $n$-manifolds $S$ and $S'$ are isotopic.
    We say that $S'$ is obtained from $S$ by an {\it $n$-cell move along $C$}. 
    In this paper, we treat the case $n=1$ and $2$: 2-cell moves and 3-cell moves. 
    
\section{Normal form and BMW chart description} \label{sec5}

In this section, we give a normal form of a knitted surface and of degree $n$ (Proposition \ref{prop5-2}), and a BMW chart of degree $n$ (Definition \ref{def5-5}).
A BMW chart contains two types of vertices: $\sigma$-vertices, which are vertices of a chart of a 2-dimensional braid, and newly added vertices.
We use an argument similar to the case of 2-dimensional braids \cite{Kamada02, Kamada96, Kamada94}.
We modify the saddle points and minimal/maximal points of knitted surfaces into saddle bands and minimal/maximal disks, repsectively.
Let $n$ be a positive integer.
We denote by a word in the knit monoid a geometric knit, that is a representative of the isotopy class associated with the word,
and we denote by $e$ the trivial knit $Q_n \times I$.

\begin{definition}\label{def-elem-transf}
We call the following sequences of $n$-knits  {\it elementary transformations} of $n$-knits, where $\beta \leftrightarrow \beta'$ denotes the sequences $\beta \to \beta'$ and $\beta' \to \beta$:
\begin{align}
(1) \ & e \leftrightarrow \sigma_i^\epsilon, &\label{eq5-1}  \\
\  & e \leftrightarrow \tau_i, & \label{eq5-2} \\
 (2) \ &\tau_i \tau_i \leftrightarrow \tau_i, & \label{eq5-12}\\
(3) \ & \tau_i \leftrightarrow \sigma_i \tau_i, & \label{eq5-13}\\
\ &
 \tau_i \leftrightarrow \tau_i \sigma_i, & \label{eq5-14}\\
(4)\  & \sigma_i \sigma_i^{-1} \leftrightarrow e, \quad  \sigma_i^{-1} \sigma_i \leftrightarrow e, & \label{eq5-4}\\
& \sigma_i \sigma_j \leftrightarrow \sigma_j \sigma_i & (|i-j|>1), \label{eq5-9}\\
& \sigma_i \tau_j \leftrightarrow \tau_j \sigma_i & (|i-j|>1), \label{eq5-10}\\
& \tau_i \tau_j \leftrightarrow \tau_j \tau_i & (|i-j|>1), \label{eq5-11}\\
& \sigma_i \sigma_j \sigma_i \leftrightarrow \sigma_j \sigma_i \sigma_j  & (|i-j|=1),\label{eq5-5}\\
& \sigma_i \sigma_j \tau_i \leftrightarrow \tau_j \sigma_i \sigma_j  & (|i-j|=1), \label{eq5-6}
\end{align}
where $\epsilon \in \{+1, -1\}$, $i,j \in \{1,\ldots, n-1\}$.

We define the surface $S$ {\it presented by an elementary transformation}, as follows. We denote by $\beta \leftrightarrow \beta'$ the elementary transformation.
\begin{enumerate}

\item $e \leftrightarrow \sigma_i^\epsilon$, $e \leftrightarrow \tau_i$.

The surface $S$ is the surface presented by the motion picture $\beta \to \beta \cup B \to h(\beta; B)$, where
$B$ is a band; see Figure \ref{fig5}. For the case $e \leftrightarrow \sigma_i^\epsilon$, $B$ corresponds to the crossing $\sigma_i^\epsilon$, and for the case $e \leftrightarrow \tau_i$, $B$ is a band which connects the $i$th and $(i+1)$th strings of $e$.

The motion pictures presenting
(\ref{eq5-1})--(\ref{eq5-2}) for $\epsilon=+1$ is as in Figure \ref{fig5}. For the case $\epsilon=-1$, the motion pictures are obtained as the mirror image of Figure \ref{fig5}, with respect to $D^2 \times \{1/2\}$. 

\begin{figure}[ht]
\includegraphics*[height=3cm]{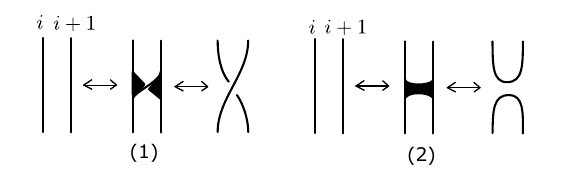}
\caption{The motion pictures of the knitted surfaces presented by (1) $e \leftrightarrow \sigma_i$ and (2)
$e \leftrightarrow \tau_i$.
}\label{fig5}
\end{figure}

\item
$\tau_i \tau_i \leftrightarrow \tau_i$.

The surface $S$ is the surface presented by the motion picture $\tau_i \tau_i \to \tau_i \cup D \to \tau_i $, where
$D$ is a disk such that $\partial D$ is the embedded circle in $\tau_i \tau_i$; see Figure \ref{fig11}.

\begin{figure}[ht]
\includegraphics*[height=3cm]{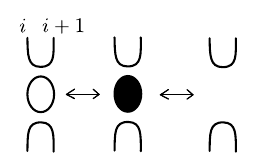}
\caption{The motion picture of the knitted surface presented by $\tau_i \tau_i \leftrightarrow \tau_i$, where we omit the $j$th strings for $j \neq i, i+1$.}
\label{fig11}
\end{figure}

\item $\tau_i \leftrightarrow \sigma_i \tau_i$,
 $\tau_i \leftrightarrow \tau_i \sigma_i$.

The surface $S$ is a surface presented by the motion picture $\beta \overset{\cong}{\to} \beta'$ such that
the diagrams of slices $\{S_{[t]}\}$ are associated with a Reidemeister move of type I.

\item
These transformations are relations of the knit monoid $D_n$ (Proposition \ref{Proposition: Presentation of Knit monoid}).
The surface $S$ is a surface presented by the motion picture $\beta \overset{\cong}{\to} \beta'$ which satisfies the following.

\noindent
(4a) $\sigma_i \sigma_i^{-1} \leftrightarrow e$, \quad  $\sigma_i^{-1} \sigma_i \leftrightarrow e$.

The diagrams of slices $\{S_{[t]}\}$ are associated with a Reidemeister move of type II.
 The motion picture presenting
(\ref{eq5-4}) for $\epsilon=+1$ is as in Figure \ref{fig6}.

 \noindent
 (4b) $\sigma_i \sigma_j \leftrightarrow \sigma_j \sigma_i$, $\sigma_i \tau_j \leftrightarrow \tau_j \sigma_i$,
 $\tau_i \tau_j \leftrightarrow \tau_j \tau_i$ \quad $(|i-j|>1)$.

The motion pictures presenting
(\ref{eq5-9})--(\ref{eq5-11}) are as in Figure \ref{fig10}. The diagrams of $\{S_{[t]}\}_{t \in I}$
are related by an ambient isotopy of the plane. At $t=1/2$, the slice $S_{[1/2]}$ is not presented by a word in the knit monoid $D_n$, while other slices are presented by words in $D_n$.

\begin{figure}[ht]
\includegraphics*[height=2.5cm]{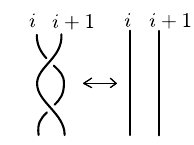}
\caption{A Reidemeister move of type II  presented by $\sigma_i \sigma_i^{-1} \leftrightarrow e$,
where we omit the $j$th strings for $j \neq i, i+1$. }
\label{fig6}
\end{figure}

\begin{figure}[ht]
\includegraphics*[height=7cm]{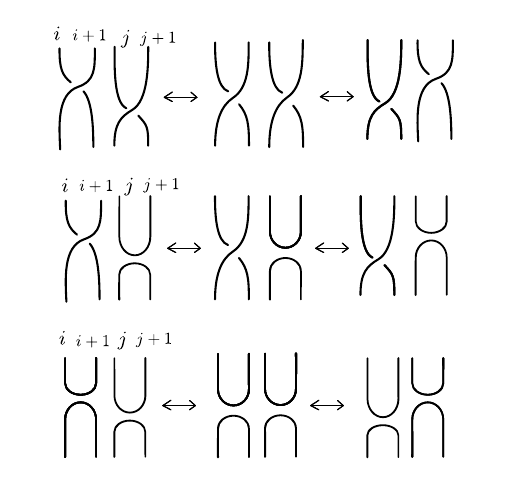}
\caption{Motion pictures of knitted surfaces presented by $\sigma_i \sigma_j \leftrightarrow \sigma_j \sigma_i$ (top figure), $\sigma_i \tau_j \leftrightarrow \tau_j \sigma_i$ (lower left figure) and $\tau_i \tau_j \leftrightarrow \tau_j \tau_i$ (lower right figure), $|i-j|>1$, where $i<j$ and we omit the $k$th strings for $k \neq i, i+1,  j, j+1$.}
\label{fig10}
\end{figure}

\noindent
(4c)
$\sigma_i \sigma_j \sigma_i \leftrightarrow \sigma_j \sigma_i \sigma_j$, $\sigma_i \sigma_j \tau_i \leftrightarrow \tau_j \sigma_i \sigma_j$ \quad $(|i-j|=1)$.

The diagrams of slices $\{S_{[t]}\}$ are associated with a Reidemeister move of type III (respectively two Reidemeister moves of type II) for the first case (respectively the second case).
For (\ref{eq5-5}), the diagram of the knitted surface (or the braided surface) contains a triple point.
 The motion picture presenting the first case
(\ref{eq5-5}) for $\epsilon=+1$ and $j=i+1$ is as in Figure \ref{fig7}.

\begin{figure}[ht]
\includegraphics*[height=3cm]{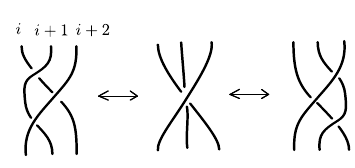}
\caption{A Reidemeister move of type III presented by  $\sigma_i \sigma_j \sigma_i \leftrightarrow \sigma_j \sigma_i \sigma_j$ ($|i-j|=1$), for $j=i+1$,
where we omit the $k$th strings for $k \neq i, i+1, i+2$.}
\label{fig7}
\end{figure}

\end{enumerate}
Given an elementary transformation $\beta \to \beta'$ of $n$-knits, the surface presented by $\beta \to \beta'$ is
a knitted surface of degree $n$, which will be called an {\it elementary knitted surface}.
\end{definition}

\begin{definition}\label{Definition: knitted surface in normal form}
    A knitted surface $S$ of degree $n$ is said to be in a \textit{normal form} if it is a horizontal product of elementary knitted surfaces.

  Namely,  knitted surface $S$ of degree $n$ is in a normal form if
    there exists a partition $0=t_0<t_1< \ldots <t_{m}=1$ for some $m$ such that for slices $\beta_j = S_{[t_j]}=p( S \cap (D^2 \times I_3 \times \{t_j\}))$, we have the following.
    \begin{enumerate}
        \item Each slice $\beta_j$ is an $n$-knit $(j=0, \ldots, m)$.

        \item Each sequence $\beta_{j-1} \to \beta_{j}$ is  an elementary transformation, and $S\cap[t_{j-1}, t_j]$ is a surface presented by $\beta_{j-1} \to \beta_{j}$  $(j=1, \ldots, m)$.
        \end{enumerate}
\end{definition}

\begin{proposition}\label{prop5-2}
    Every knitted surface is equivalent to a knitted surface in a normal form.
\end{proposition}

\begin{proof}
Let $S$ be a knitted surface.
By equivalence, we deform $S$ so that the pairings are sufficiently thin, and the resulting surface, also denoted by $S$, satisfies the condition as follows: there exists a set $J$ of finite number of inner points in $I_4$ such that
for any $s \in J$, each slice $S_{[s]}$ is described by one of the followings.
\begin{enumerate}
    \item The union of a knit and a sufficiently small attaching band (respectively disk), that is a saddle band (respectively a minimal/maximal disk).
    \item The union of a knit and a sufficiently small attaching band that contains the preimage of a branch point.
\end{enumerate}
For any interval $[t,t']$ of $\mathrm{Cl}(I_4 \backslash N(J))$, the diagrams of $S_{[t]}$ and $S_{[t']}$ are related by a finite sequence of Reidemeister moves of types I, II and III and ambient isotopies of the plane.
Divide $[0,1]$ into subintervals $[t_{j-1}, t_j]$ with $0=t_0<t_1<\cdots<t_m=1$ such that we have the followings.
\begin{enumerate}
    \item[(3)] Each $s \in J$ is the middle point of some $[t_{j-1}, t_j]$.
    \item[(4)] For each interval $[t_{j-1}, t_j]$ that does not contain elements of $J$, we have one of the followings.
    \begin{enumerate}
        \item[(4a)] The diagrams of $\beta_{j-1}$ and $\beta_j$ are related by a Reidemeister move.
        \item[(4b)] The diagrams of $\beta_{j-1}$ and $\beta_j$ are related by an ambient isotopy of the plane  that changes words of the knit.
    \end{enumerate}
\end{enumerate}
Here, $\beta_j=S_{[t_j]}=p(S\cap (D^2 \times I_3 \times \{t_j\}))$.
From now on, we consider the case of
$\beta_{j-1} \neq \beta_j$.
For Case (4b), since the ambient isotopy keeps the slices in the form of knits, this is described by a sequence consisting of exchange of two letters: $w_i w_j \leftrightarrow w_j w_i$ $(w_i \in \{\sigma_i, \sigma_i^{-1}, \tau_i \mid i=1, \ldots, n-1\})$ with $|i-j|>1$.
We divide $[t_{j-1}, t_j]$ into subintervals and denote the resulting division by the same notation so that we have (3), and one of either (4a) or (4b') as follows.
\begin{enumerate}
    \item[(4b')] The diagrams of $\beta_{j-1}$ and $\beta_j$    are related by one of the following: (\ref{eq5-9})--(\ref{eq5-11}) in Definition \ref{def-elem-transf} and (\ref{eq5-18})--(\ref{eq5-19}) in Lemma \ref{lem5-3}.
\end{enumerate}

Let $B$ be a band attaching to $\tau_i$ such that $h(\tau_i; B)= \sigma_i^\epsilon$, where $\epsilon \in \{+1, -1\}$. We define the knitted surface presented by
\begin{equation}\label{tuika}
\tau_i \leftrightarrow \sigma_i^\epsilon \quad (\epsilon \in \{+1, -1\}).
\end{equation}
as the surface given by the motion picture $\tau_i \to \tau_i \cup B \to e$ or $e \to \tau_i \cup B \to \tau_i$.

For $[t_{j-1}, t_j]$ that contains an element $s \in J$, if the slice at $s$ contains a saddle band or a disk containing the preimage of a branch point, since such bands and disks are sufficiently small, $\beta_{j}$ is obtained from $\beta_{j-1}$ by band surgery associated with (\ref{eq5-1}), (\ref{eq5-2}), or  (\ref{tuika}).
The knitted surface presented by (\ref{tuika}) is equivalent to the knitted surface presented by
\begin{equation}
\tau_i \to \sigma_i^\epsilon \tau_i \to \sigma_i^\epsilon, \  \text{or} \  \ \sigma_i^\epsilon \to \sigma_i^\epsilon \tau_i \to \tau_i, \label{eq1014}
\end{equation}
which is a combination of (\ref{eq5-2}) and (\ref{eq5-21}).
The equivalence is shown by considering an isotopy of knitted surfaces that carries the band of (\ref{tuika}) to the band of $\tau_i \leftrightarrow  \sigma_i^\epsilon \tau_i$ of (\ref{eq1014}).
If the slice at $t$ contains a minimal/maximal disk, since such a disk is sufficiently small,
 $\beta_{j}$ is obtained from $\beta_{j-1}$
by pasting a disk at $(t_{j-1}+t_j)/2$ as described by (\ref{eq5-12}).
For $[t_{j-1}, t_j]$ that satisfies (4a), we have the following cases.
\begin{enumerate}
    \item[(Case 1)] A Reidemeister move is of type I.
    Since we must keep slices in the form of knits, this is presented by (\ref{eq5-20})--(\ref{eq5-21}) in Lemma \ref{lem5-3}.
    By Lemma \ref{lem5-3}, it is presented by a sequence consisting of (\ref{eq5-4}) and (\ref{eq5-13})--(\ref{eq5-14}).
    \item[(Case 2)] A Reidemeister move is of type II.
    In this case, we have (\ref{eq5-4}).
    \item[(Case 3)] A Reidemeister move is of type III.
    In this case, the move is one of (\ref{eq5-15})--(\ref{eq6-17}) in Lemma \ref{lem5-3}. By Lemma \ref{lem5-3}, it is presented by a sequence consisting of (\ref{eq5-4})--(\ref{eq5-6}).
\end{enumerate}

For $[t_{j-1}, t_j]$ that satisfies (4b'), by Lemma \ref{lem5-3}, this is presented by a sequence consisting of (\ref{eq5-4}) and (\ref{eq5-9})--(\ref{eq5-11}).

Thus, by taking a division of $[t_{j-1}, t_j]$ into subintervals and denoting the resulting division of $I_4=[0,1]$ by the same notation $0=t_0< t_1\cdots < t_m=1$, we have $S$ in a normal form.
\end{proof}

\begin{lemma}\label{lem5-3}
The Reidemeister moves presented by $(\ref{eq5-20})$, $(\ref{eq5-21})$, $(\ref{eq5-18})$, $(\ref{eq5-19})$, $(\ref{eq5-15})$--$(\ref{eq5-16})$, and $(\ref{eq6-17})$, as follow, are realized by the moves $(\ref{eq5-4})$ and the corresponding $(\ref{eq5-13})$, $(\ref{eq5-14})$, $(\ref{eq5-9})$, $(\ref{eq5-10})$, $(\ref{eq5-5})$ and $(\ref{eq5-6})$ in Proposition \ref{prop5-2}, respectively:
\begin{align}
& \tau_i \leftrightarrow \sigma_i^\epsilon \tau_i, \label{eq5-20} & \\
& \tau_i \leftrightarrow \tau_i \sigma_i^\epsilon, \label{eq5-21}& \\
& \sigma_i^\epsilon \sigma_j^\delta \leftrightarrow \sigma_j^\delta \sigma_i^\epsilon   &  (|i-j|>1), \label{eq5-18}\\
& \sigma_i^\epsilon \tau_j \leftrightarrow \tau_j \sigma_i^\epsilon  &  (|i-j|>1), \label{eq5-19}\\
&\sigma_i^\epsilon \sigma_j^\epsilon \sigma_i^{\epsilon} \leftrightarrow \sigma_j^{\epsilon} \sigma_i^{\epsilon} \sigma_j^{\epsilon}  &   (|i-j|=1), \label{eq5-15}\\
&\sigma_i^\epsilon \sigma_j^{\epsilon} \sigma_i^{-\epsilon} \leftrightarrow \sigma_j^{-\epsilon} \sigma_i^{\epsilon} \sigma_j^{\epsilon}   &   (|i-j|=1), \label{eq5-16}\\
&\sigma_i^\epsilon \sigma_j^{\epsilon} \tau_i \leftrightarrow \tau_j \sigma_i^{\epsilon} \sigma_j^\epsilon   &  (|i-j|=1), \label{eq6-17}
\end{align}
where $\epsilon, \delta \in \{+1, -1\}$, $i,j \in \{1,\ldots, n-1\}$.
\end{lemma}

\begin{proof}

We have
\begin{eqnarray*}
\sigma_i^{-1} \sigma_j^{-1} \tau_i &\leftrightarrow& \sigma_i^{-1} \sigma_j^{-1} \tau_i (\sigma_j \sigma_i \sigma_i^{-1} \sigma_j^{-1})\\
&=& \sigma_i^{-1} \sigma_j^{-1} (\tau_i \sigma_j \sigma_i) \sigma_i^{-1} \sigma_j^{-1}\\
&\leftrightarrow& \sigma_i^{-1} \sigma_j^{-1} (\sigma_j \sigma_i\tau_j)\sigma_i^{-1} \sigma_j^{-1} \\
&=& (\sigma_i^{-1} \sigma_j^{-1} \sigma_j \sigma_i) \tau_j \sigma_i^{-1} \sigma_j^{-1}\\
&\leftrightarrow& \tau_j \sigma_i^{-1} \sigma_j^{-1};
\end{eqnarray*}
hence we have (\ref{eq6-17}).
The other cases can be shown similarly; see \cite[Lemma 5.3]{N} for a detailed proof.
\end{proof}

\begin{definition}\label{def5-5}
Let $\Gamma$ be a finite graph in $B^2$.
Then, $\Gamma$ is a {\it BMW chart} of degree $n$, or simply a {\it chart} of degree $n$, if it satisfies the following conditions.

\begin{enumerate}
\item
The intersection $\Gamma \cap \partial B^2$ consists of a finite number of endpoints of edges of $\Gamma$ meeting $\partial B^2$ orthogonally. Though they are vertices of degree one, we call elements of $\Gamma \cap \partial B^2$ {\it boundary points}, and we call only a vertex in $\mathrm{Int}(B^2)$ {\it a vertex of $\Gamma$}.

\item
Each edge is equipped with a label in $\{1,\ldots, n-1\}$ and moreover, each edge is either oriented or unoriented. We call an edge with an orientation (respectively without an orientation) a {\it $\sigma$-edge} (respectively {\it $\tau$-edge}).

\item
Each vertex is of degree 1, 3, 4 or 6 as follows; see Figure \ref{BMWchart}.
\end{enumerate}

\begin{enumerate}[(i)]
\item[]
\hspace{-1cm}

\item
A vertex of degree one such that the vertex is  connected with an edge which has a  label $i$ for some $i \in \{1,\ldots, n-1\}$ and equipped with/without an orientation. We call it a {\it black vertex}. We depict a black vertex by a small disk, and if it is connected with a $\sigma$-edge (respectively $\tau$-edge), that is, if it has (respectively does not have) an orientation, then we call it a {\it black $\sigma$-vertex} (respectively a {\it black $\tau$-vertex}).

\item
A vertex of degree 3 such that around the vertex, the three edges have the label $i$ for some $i\in \{1,\ldots, n-1\}$ and either all of three edges are unoriented or one of three edges is oriented. 
We call it a {\it trivalent $\tau$-vertex} or a \textit{mixed trivalent vertex}, respectively, and we call both types a {\it trivalent vertex}. 

\item
A vertex of degree 4 such that around the vertex, each pair of diagonal edges
has the same label in $\{1, \ldots, {n-1
}\}$, and each pair has a coherent orientation or does not have an orientation, and the labels of the two pairs, $i$ and $j$,  satisfy $|i-j|>1$. We call the vertex a {\it crossing}; in particular, we call the vertex consisting of $\sigma$-edges (respectively $\tau$-edges) a {\it $\sigma$-crossing} (respectively a {\it $\tau$-crossing}).

\item
A vertex of degree 6 such that around the vertex, the six edges have labels $i$and $j$ alternately clockwise ($i,j \in \{1,\ldots, n-1\}$), such that $|i-j|=1$. And three consecutive edges (respectively the first and the second edges) have an orientation toward the vertex and the other consecutive edges (respectively the 4th and the 5th edges) have an orientation from the vertex.  We depict the vertex by a small circle. 
We call the vertex a {\it white vertex}; in particular, we call the vertex consisting of $\sigma$-edges a {\it white $\sigma$-vertex}.
\end{enumerate}

\end{definition}

\begin{figure}[ht]
\includegraphics*[height=8.5cm]{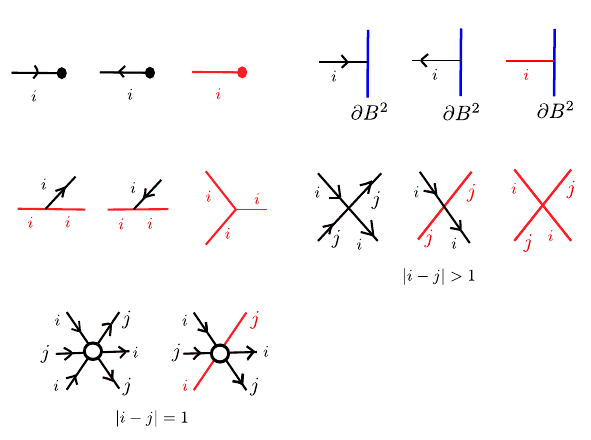}
\caption{Vertices of a BMW chart.}
\label{BMWchart}
\end{figure}

\begin{definition}
For a chart $\Gamma$ in $B^2$, a {\it height function} is a surjective continuous map $\phi: B^2 \to [0,1]$ satisfying the following conditions.

\begin{enumerate}
\item
For any $t \in [0,1]$, the preimage $\phi^{-1}(t)$ is homeomorphic to $I$.
\item
For any $t \in [0,1]$, $\Gamma \cap \phi^{-1}(t)$ consists of a finite number of points.
\end{enumerate}

Further, we call a height function $\phi$ a {\it good height function} if it satisfies the following condition.

\begin{enumerate}
\item[(3)]
For any vertex $v$ of $\Gamma$ and for a sufficiently small $\epsilon>0$,
\[
|\# (\Gamma \cap N_{t+\epsilon/2})-\#(\Gamma \cap N_{t-\epsilon/2})| \leq 1, \]
where $t=\phi(v)$ and $N_s=\phi^{-1}(s) \cap N$ for an $\epsilon$-neighborhood $N$ of $v$.
\end{enumerate}
\end{definition}

\begin{claim}\label{claim0308}
For a chart $\Gamma$ in $B^2$, there exists a height function.
\end{claim}

For a chart $\Gamma$, we construct a surface $S(\Gamma)$ using a height function $\phi$ as in Section \ref{sec6-2}. We remark that a knitted surface in a normal form is $S(\Gamma)$ that is constructed from a chart $\Gamma$ and a good height function $\phi: I_3 \times I_4 \to I_4$, where $\phi$ is given by the projection to the second factor.
\\

From a knitted surface of degree $n$, we obtain a BMW chart of degree $n$ (Section \ref{sec6-1}), and
from a given BMW chart $\Gamma$,
we give a method to construct a surface $S(\Gamma)$  (Section \ref{sec6-2}).

\begin{theorem}\label{thm4-6}
A knitted surface has a BMW chart description.
More precisely, for a knitted surface $S$, there exists a BMW chart $\Gamma$ such that $S(\Gamma)$ is a knitted surface and $S$ and $S(\Gamma)$ are equivalent.
\end{theorem}

\begin{remark}\label{rem0308}
Knitted surfaces include the surfaces isotopic to the following: 
\begin{enumerate}
\item
The surface presented by
\begin{equation}
\tau_i \leftrightarrow \sigma_i^\epsilon, \label{eq320}
\end{equation}
where one is obtained from the other by attaching a band and
where $\epsilon \in \{+1, -1\}$, $i \in \{1,\ldots, n-1\}$. .

\item
The surface presented by the following:
\begin{eqnarray}
&& \tau_i \tau_j \tau_i \leftrightarrow \tau_i \label{eq321}
\\
&& \tau_i \sigma_j^\epsilon \tau_i \leftrightarrow \tau_i \label{eq322}
\\
&& \sigma_i^\epsilon \sigma_j^\epsilon \tau_i \leftrightarrow \tau_j \tau_i \label{eq323}\\
&& \tau_i \tau_j \tau_i \leftrightarrow \tau_i \sigma_j^\epsilon \tau_i, \label{eq324}
\end{eqnarray}
where one is obtained from the other by an isotopy of $D^2 \times I_3$ rel $\partial$ and $|i-j|=1$, $\epsilon \in \{+1, -1\}$, $i,j \in \{1,\ldots, n-1\}$.

\end{enumerate}

The surface (\ref{eq324}) is a combination of (\ref{eq321}) and (\ref{eq322}).
It suffices to see that the surfaces (\ref{eq320})--(\ref{eq323}), (1)--(4) as in Figure \ref{20240324-2}, are isotopic to the knitted surfaces presented by charts (1)--(4) in Figure \ref{20240308-1}, respectively.

The knitted surface presented by the chart (1) in Figure \ref{20240308-1} is as in Figure \ref{20240324-3}. By an isotopy, we can deform the band to a twisted band as in (1) in Figure \ref{20240324-2}, hence the knitted surface is isotopic to the surface (\ref{eq320}).
The knitted surface presented by the chart (2) in Figure \ref{20240308-1} is as in the upper surface in Figure \ref{20240325-1}. By a 3-cell move, we see that the surface is isotopic to the lower surface in Figure \ref{20240325-1}; hence it is isotopic to the surface (\ref{eq321}).
The case for the surface (3) is shown by a similar way to the surface (2).
The knitted surface presented by the chart (4) in Figure \ref{20240308-1} is as in Figure \ref{20240325-2}, that is isotopic to the surface (\ref{eq323}).
\end{remark}

\begin{figure}[ht]
\includegraphics*[height=55mm]{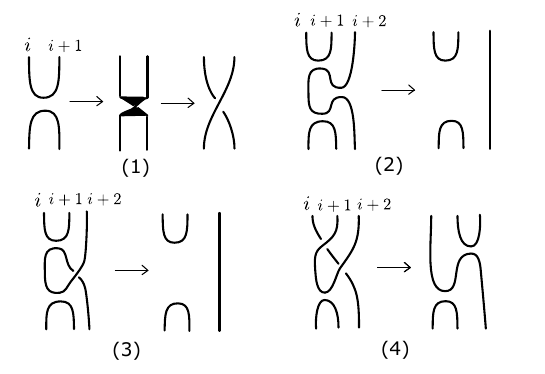}
\caption{The surfaces in Remark \ref{rem0308}, where $i<j$ and $\epsilon=+1$, and we omit the $k$th strings for $k \neq i, i+1$ or $k \neq i,  j, j+1$.}
\label{20240324-2}
\end{figure}

\begin{figure}[ht]
\includegraphics*[height=6cm]{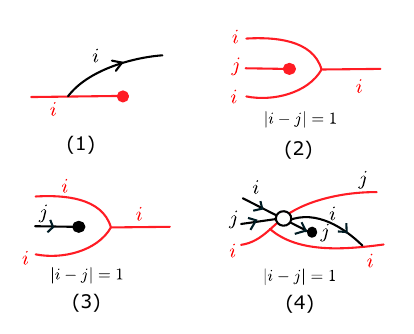}
\caption{BMW charts presenting the surfaces in Remark \ref{rem0308}, where $\epsilon=+1$.}
\label{20240308-1}
\end{figure}

\begin{figure}[ht]
\includegraphics*[height=2.5cm]{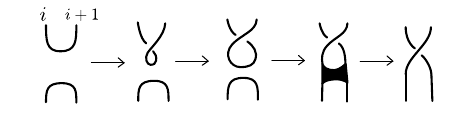}
\caption{The knitted surface presented by the chart (1) in Figure \ref{20240308-1}, where we omit the $k$th strings for $k \neq i, i+1$.}
\label{20240324-3}
\end{figure}

\begin{figure}[ht]
\includegraphics*[height=5.5cm]{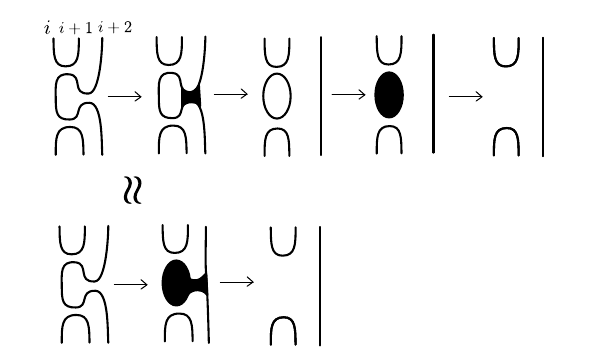}
\caption{The knitted surface presented by the chart (2) in Figure \ref{20240308-1}, where $i<j$ and we omit the $k$th strings for $k \neq i,  j, j+1$.}
\label{20240325-1}
\end{figure}

\begin{figure}[ht]
\includegraphics*[height=5.5cm]{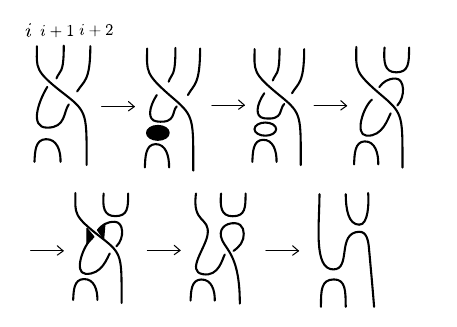}
\caption{The knitted surface presented by the chart (4) in Figure \ref{20240308-1}, where $i<j$ and we omit the $k$th strings for $k \neq i,  j, j+1$.}
\label{20240325-2}
\end{figure}

\section{Proof of Theorem \ref{thm4-6}}\label{sec6-0}
\subsection{From a knitted surface to a BMW chart}\label{sec6-1}
Let $S$ be a knitted surface of degree $n$ in $D^2 \times I_3 \times I_4$. Let $\phi: D^2 \to [0,1]$ be a good height function.
We identify $\phi$ with the projection $I_3 \times I_4 \to I_4$.

We obtain a BMW chart $\Gamma$ in $I_3 \times I_4$ from $S$ as follows.
By equivalence, we may assume that $S$ is in a normal form.
Namely, there exists a partition $0=t_0<t_1< \ldots <t_{m}=1$ satisfying the conditions of Definition~\ref{Definition: knitted surface in normal form}.
For each $j \in \{0,\ldots, m\}$, let $w_1 w_2 \cdots w_{n_j}$ be the word of $\beta_j$ $(w_k \in \{\sigma_i, \sigma_i^{-1}, \tau_i \mid i=1,\ldots,n-1\})$.
Take $n_j$ points in $I_3 \times \{t_j\}$ and assign $i$ to the $k$th point with respect to the orientation of $I_3$, where $i$ is the index of $w_k$, that is, $w_k$ is one of $\sigma_i, \sigma_i^{-1}, \tau_i$.
Then, we connect the points in $I_3 \times \{t_{j-1}\}$ and $I_3 \times \{t_{j}\}$ by arcs and vertices in $I_3 \times [t_{j-1}, t_{j}]$ in such a way as follows, where $j=1, \ldots, m-1$.
We connect the points that do not concern (\ref{eq5-1})--(\ref{eq5-11}) by mutually disjoint parallel arcs, and those concerning (\ref{eq5-1})--(\ref{eq5-12}), (\ref{eq5-5})--(\ref{eq5-11}) by arcs connected with
a vertex in $I_3 \times \{(t_{j-1}+t_{j})/2\}$.
Now, the points concerning (\ref{eq5-4}) are a pair of points in $I_3 \times \{t_{j-1}\}$ or $I_3 \times \{t_j\}$; and we connect the pair of points by a simple arc.
By regarding the points in $I_3 \times \{t_{j}\}$ $(j=1, \ldots, m-1)$ as inner points of edges, we obtain a graph.
Assign each edge with the label of the belonging points, and moreover, if the point in $I_3 \times \{t_j\}$ comes from a letter $\sigma_i$ (respectively $\sigma_i^{-1}$) in $\beta_j$, then assign the edge with an orientation coherent with that of $I_4$ (respectively opposite to that of $I_4$).  Then, by depicting vertices as those of a BMW chart, we have a BMW chart of degree $n$.

\begin{remark}
Let $D^2=I_1 \times I_2$ $(I_1=I_2=I)$ and
let $\pi: D^2 \times B^2 \to I_2 \times B^2$ be a regular projection and let $\pi(S)$ be the diagram of $S$.
Then, a double point curve of $\pi(S)$ is projected by the projection $\pi': I_2 \times B^2 \to B^2$ to an edge of $\Gamma$ with an orientation. A triple point of $\pi(S)$ is projected by $\pi'$ to a white $\sigma$-vertex, and a branch point of $\pi(S)$ is projected to a black $\sigma$-vertex or a mixed trivalent vertex.
\end{remark}

\subsection{From a BMW chart $\Gamma$ to the surface $S(\Gamma)$ presented by $\Gamma$}\label{sec6-2}

Given a BMW chart $\Gamma$ of degree $n$ in $B^2$, we construct the surface $S$, denoted by $S(\Gamma)$.
We take a height function $\phi: B^2 \to [0,1]$ of $\Gamma$.

For $t \in [0,1]$, $\Gamma \cap \phi^{-1}(t)$ consists of a finite number of points such that either
\begin{enumerate}
\item
the intersection $\Gamma \cap \phi^{-1}(t)$ consists of transverse intersection points of edges of $\Gamma$ and $\phi^{-1}(t)$, or
\item
the intersection $\Gamma \cap \phi^{-1}(t)$ consists of vertices, critical points of $\phi|_\Gamma$, and transverse intersection points of edges of $\Gamma$ and $\phi^{-1}(t)$.
\end{enumerate}
We remark that we regard boundary points of $\Gamma$ in $\partial B^2$ as transverse intersection points of edges of $\Gamma$ and $\partial B^2$.

For Case (1), we take a knit $\beta_t$ presented by
$w_1 w_2 \cdots w_{m}$, where $m$ is the number of points of $\Gamma \cap \phi^{-1}(t)$, and $w_k=\sigma_i$ (respectively $\sigma_i^{-1}$) if the $k$th point with respect to the direction of $\phi^{-1}(t)$ is a point of an oriented edge with the label $i$ and an orientation that is coherent to the normal vector to $\phi^{-1}(t)$ (respectively an orientation opposite to the normal vector to $\phi^{-1}(t)$), and $w_k=\tau_i$ if the $k$th point is a point of an unoriented edge with the label $i$ ($k=1, \ldots, m$).
We take the knit $\beta_t$ as the slice $S \cap (D^2 \times \phi^{-1}(t))$. Further, we arrange so that a crossing of $\beta_t$ is projected to the corresponding point of $\Gamma$ by the projection to $B^2$, and the pairing $l$ of a hook pair is sufficiently small and the middle point of $l$ is projected to the corresponding point of $\Gamma$ by the projection to $B^2$.

Case (2) occurs for a finite number of $t$.
So using slices near $S \cap (D^2 \times \phi^{-1}(t))$, we construct an embedded surface.

Thus we have the surface $S$ in $D^2 \times B^2$ presented by a chart $\Gamma$.
By construction, the surface $S$ is a knitted surface whose chart is $\Gamma$.

\subsection{From the surface $S(\Gamma)$ to a knitted surface in a normal form}\label{sec6-3}

For a chart $\Gamma$, we take a height function $\phi$. We identify $\phi$ with the projection $I_3 \times I_4 \to I_4$. By modifying $\phi$ if necessary, we assume that
$S(\Gamma)$ satisfies the condition of a normal form except for the surface presented by $\tau_i \tau_i \leftrightarrow e$.

Let $\Gamma'$ be a chart obtained from $\Gamma$ by exchanging each arc as in the left figure of Figure \ref{Fig17} (or its mirror image where we put the mirror in a vertical position) to a subgraph (or its mirror image) as in the right figure of Figure \ref{Fig17}, where, in Figure \ref{Fig17}, the horizontal direction is the $I_4$ direction. The graph as in the right figure of Figure \ref{Fig17} consists of a pair of arcs $c_1, c_2$ and one edge $c_3$ such that $c_3$ is connected with a vertex of degree one and $c_1, c_2, c_3$ are connected with a vertex of degree three.
Then, the surface $S'$ constructed from $\Gamma'$ is a knitted surface in a normal form: thus $S'$ is a knitted surface whose chart $\Gamma'$ presents $S'$.
By Proposition \ref{prop6-1}, the surface $S$ in question is equivalent to $S'$. Thus, the surface $S(\Gamma)$ constructed from any chart $\Gamma$ is equivalent to a knitted surface in a normal form, and hence $S(\Gamma)$ is a knitted surface.

\begin{proposition}\label{prop6-1}
The surfaces presented by the BMW charts of degree $n$ as in the left figure and the right figure of Figure \ref{Fig17} are equivalent.
\end{proposition}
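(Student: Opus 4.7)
The plan is to describe both presented surfaces explicitly in a small region around the critical chart features, and then to exhibit an ambient isotopy of $D^2\times I_3\times I_4$ that carries one onto the other.

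I would first set coordinates so that the minimum of the U-shaped $e_i$-labeled edge of the left figure of Fig.~\ref{Fig17} lies at time $t_0$, and so that the degree-$1$ tangle vertex (type (d) of Definition \ref{def5-5}) of the right figure lies at the same $t_0$ with its degree-$3$ tangle vertex (type (i)) at some $t_1>t_0$ directly above, the two charts agreeing outside a small rectangle. For $t<t_0$ and for $t>t_1$ the intersections of the two charts with $I_3\times\{t\}$ are identical (empty below, two parallel $e_i$-arcs above), so by the construction of Section \ref{sec6-2} the surfaces $S_L$ and $S_R$ presented by the two charts agree outside the slab $D^2\times I_3\times[t_0,t_1]$; all the content of the proposition lies in this slab.

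Reading off the motion pictures, $S_R$ is in normal form: a saddle band realizing (\ref{eq5-2}) at $t_0$, followed by a minimal disk realizing (\ref{eq5-12}) at $t_1$ that creates the circle component of $e_ie_i$. The surface $S_L$ performs both operations simultaneously at $t_0$, so that its $t_0$-slice contains a saddle band and a minimal disk meeting along a common hook. I would construct a fiber-preserving ambient isotopy $\{\Phi_u\}_{u\in[0,1]}$ of $D^2\times I_3\times I_4$ that acts only in the $I_4$ coordinate, is supported in a small neighborhood of $D^2\times I_3\times[t_0,t_1]$, equals the identity outside, and for $u=1$ compresses $[t_0,t_1]$ onto $\{t_0\}$. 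Under $\Phi_1$, the minimal disk of $S_R$ is pushed down until its apex coincides with the saddle slice, and $\Phi_1(S_R)$ is precisely the surface encoded by the U-shaped $e_i$ edge on the left, giving the required equivalence in the sense of Definition \ref{BMWsurface}.

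The step I expect to be the main obstacle is checking that this compression genuinely produces $S_L$ as a locally flatly embedded surface, rather than a degenerate limit with extra singularities or a change of topological type. To handle this I would analyze the local geometry at the merge point directly: the minimal disk of $S_R$ is a small disk whose apex is at $t_1$ and whose boundary circle descends onto the top hook of the saddle band as $t\to t_0^+$; shifting $t_1$ down to $t_0$ lands this disk cleanly on that hook, and the union is exactly the local surface piece encoded by a single U-shaped $e_i$ edge. Verifying this by an explicit parameterization in a $4$-ball neighborhood, or equivalently by exhibiting a common Morse model on a disk whose two $I_4$-critical points can be brought together by a Cerf-type move, shows that $\Phi_1$ is a legitimate ambient isotopy and completes the proof.
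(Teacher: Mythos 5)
Your strategy (merge the two critical levels of the right-hand surface so as to recover the left-hand one) is a genuinely different route from the paper's, but as written it has a gap at its central step. First, the isotopy $\{\Phi_u\}$ you describe is not an ambient isotopy: a map that compresses $[t_0,t_1]$ onto $\{t_0\}$ is not injective, and more fundamentally any isotopy of the form $\mathrm{id}_{D^2\times I_3}\times\phi_u$, acting only in the $I_4$ coordinate, carries distinct levels to distinct levels, so it can never bring the minimal-disk level $t_1$ down to the saddle level $t_0$ while fixing the surface elsewhere; to lower only the minimal disk you need an isotopy supported in a $4$-ball neighborhood of that disk, i.e.\ one depending on the $D^2\times I_3$ coordinates. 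Second, even granting such a push-down, the assertion that the resulting surface \emph{is} the surface $S_L$ presented by the curving edge is exactly the content of the proposition; you flag it as the main obstacle and gesture at an explicit parametrization or a Cerf-type move, but this is the step that actually has to be carried out. Note also that your description of $S_L$ needs justification from the definition of the presented surface: by the construction of Section \ref{sec6-2}, the surface over a vertex-free disk containing the curving edge is obtained by straightening the arc and taking the surface with constant slices $e_i$, so $S_L$ is an isotoped product of the pair of hooks with an interval. One can check that for this bent product the function $p'|_S$ has one saddle point and one minimal point occurring at the same $I_4$-level but at distinct points (not ``a saddle band and a minimal disk meeting along a common hook'' in one slice, which as stated would conflict with embeddedness); supplying this local analysis is part of what your argument presupposes.

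For comparison, the paper's proof avoids all critical-point analysis: it observes that $S_L$ is the image of the product surface with constant slices $e_i$ under an ambient isotopy of $D^2\times E$ induced from an ambient isotopy of the chart disk $E$, hence is a pair of trivially embedded disks whose boundary is the closure of the tangle $e_ie_i$ in the solid torus $D^2\times\partial E$; the surface $S_R$ read off the motion picture $\mathbf{1}\to e_i\to e_ie_i$ (a saddle band realizing (\ref{eq5-2}) followed by a minimal disk realizing (\ref{eq5-12})) is likewise such a pair of disks with the same boundary, and the two are related by an isotopy of surfaces as in Fig.\ \ref{Fig23}. If you wish to keep your route, the cleanest repair is to run it backwards: perturb $S_L$ by a small isotopy supported near its minimal point, raising that level slightly above the saddle level, so that the perturbed surface has the same motion picture as $S_R$; you must then still argue that two surfaces with this motion picture are equivalent in the sense of Definition \ref{BMWsurface}, which is precisely where the paper's global identification of both surfaces as trivial disk pairs with common boundary does the work.
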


From a cylinder $D^2 \times I$, we obtain a solid torus $D^2 \times S^1$ by identifying $D^2 \times \{0\}$ and $D^2 \times \{1\}$.We denote the map $D^2 \times I \to D^2 \times S^1$ by $f$. For a knit $\beta$ in $D^2 \times I$, the {\it closure} of $\beta$ in $D^2 \times S^1$ is the image $f(\beta)$.

\begin{proof}[Proof of Proposition \ref{prop6-1}]
We denote by $N$ the disk in which BMW charts are drawn.
The surface $S$ presented by the chart as in the left figure in Figure \ref{Fig17} is the one as depicted in the middle figure of Figure \ref{Fig23}, that is obtained by deforming the surface whose slices are $\tau_i$ as in the left figure in Figure \ref{Fig23} by an ambient isotopy of $D^2 \times N$ induced from that of $N$. The surface $S$ consists of a pair of disks whose boundaries are the closure of the knit $\tau_i\tau_i$ in the solid torus $D^2 \times \partial N$, where we ignore other components coming from the $j$th strings for $j \neq i, i+1$.
This surface $S$ is related with the surface $S'$ as in the right figure of Figure \ref{Fig23} by an isotopy of surfaces. Hence, $S$ is equivalent to $S'$, that is the knitted surface presented by the chart as in the right figure of Figure \ref{Fig17}.
Hence we have the required result.
\end{proof}

\begin{figure}[ht]
\includegraphics*[height=3cm]{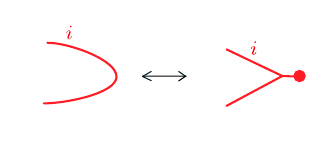}
\caption{Charts of degree $n$ whose associated surfaces are equivalent, where $i \in \{1, \ldots, n-1\}$ and we omit the 2-disks in which the charts are drawn.}
\label{Fig17}
\end{figure}

\begin{figure}[ht]
\includegraphics*[height=5.5cm]{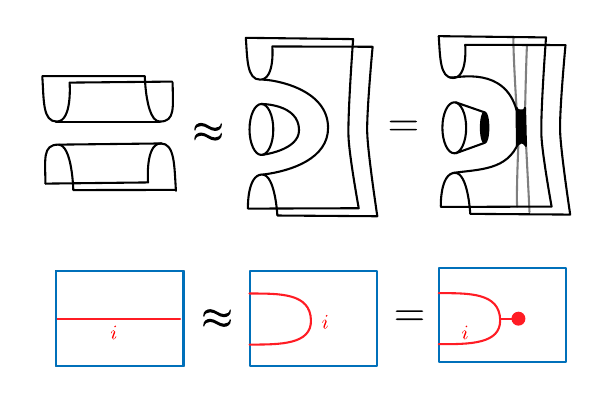}
\caption{The surfaces presented by charts in Figure \ref{Fig17}  are equivalent, where we omit other components coming from the $j$th strings for $j \neq i, i+1$.}
\label{Fig23}
\end{figure}

\begin{proof}[Proof of Theorem \ref{thm4-6}]
By the argument in Sections \ref{sec6-1}--\ref{sec6-3}, we have the required result.
\end{proof}

\section{Proof of Theorem~\ref{Theorem: Alexander theorem for surfaces in 4-ball}}\label{sec-Alexander}

In this section, we show the following theorem:
\begin{theorem}\label{Theorem: Alexander theorem for surfaces in 4-ball}
    Every compact surface with non-empty boundaries properly embedded in $D^2 \times B^2$ is isotopic to some knitted surface.
\end{theorem}

The proof of Theorem~\ref{Theorem: Alexander theorem for surfaces in 4-ball} is similar to the proof of \cite[Theorem~1.1]{Yasuda21}.
For a set $\mathbf{X}=\{X_1, \ldots, X_m\}$, we denote by $\cup \mathbf{X}$ the union of elements $X_1 \cup X_2 \cup \cdots \cup X_m$.

\subsection{The plat closure of braids, Hilden subgroup, and adequate braids}
In this subsection, we recall the notions of the plat closure of braids. An $n$-braid is an $n$-knit whose knit structure is the empty set.  

A \textit{wicket} is a semi-circle $\omega$ properly embedded in $D^2 \times I$ such that $\omega$ meets $D^2 \times \{0\}$ orthogonally.
A \textit{configuration of $n$ wickets} is a set of mutually disjoint $n$ wickets.
We denote by $\mathcal{W}_n$ the set of configurations of $n$ wickets.
Let $\omega_0$ be a configuration of $n$ wickets such that $\partial (\cup \omega_0) = Q_{2n} \times \{0\}$ and $\{q_{2i-1}\} \times \{0\}$ and $\{q_{2i}\} \times \{0\}$ are the boundary points of the same component of $\omega_0$ for each $i \in \{1, \dots, n\}$, where $\cup \omega_0$ denotes the union of wickets of $\omega_0$. Let $\omega_0^* \subset D^2 \times I$ be the mirror image of $\omega_0$ with respect to $D^2 \times \{1/2\}$. 
We remark that $\cup \omega_0$ and $\cup \omega_0^*$ are disjoint and $(\cup \omega_0) \cup (\cup \omega_0^*)$ represents a $2n$-knit $\tau = \tau_1 \tau_3 \dots \tau_{2n-1}$. 
For a $2n$-braid $\beta$, the \textit{plat closure} of $\beta$, denoted by $\mathrm{pcl}(\beta)$, is the link in $\mathbb{R}^3$ in the form of the product $\cup \omega_0^* \cdot \beta \cdot \cup \omega_0 \subset D^2 \times I \subset \mathbb{R}^3$, where  
the product is defined similarly to the product of braids; see Figure~\ref{20240707-1}.

\begin{figure}[h]
    \centering
    \includegraphics[width=0.7\hsize]{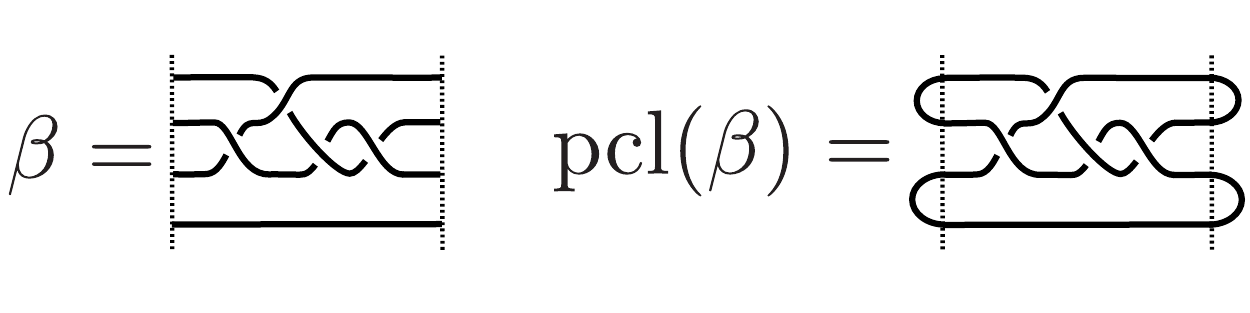}
    \caption{Example of a braid $\beta$ and its plat closure $\mathrm{pcl}(\beta)$. }
    \label{20240707-1}
\end{figure}

For a loop $f: (I, \partial I) \to (\mathcal{W}_n, \omega_0)$, we define the $2n$-braid $\beta_f$ by $\beta_f = \bigcup_{t \in I} \pi(\partial f(t)) \times \{t\} \subset D^2 \times I$, where $\pi: D^2 \times \{0\} \to D^2$ is the projection. 
Let $\partial_*: \pi_1(\mathcal{W}_n, \omega_0) \to B_{2n}$ be the homomorphism given by $\partial _*([f])=[\beta_f]$ for a loop $f$ representing an element of $\pi_1(\mathcal{W}_n, \omega_0)$. 
 The homomorphism $\partial_*$ is injective and the image of $\partial_*$ is called the \textit{Hilden subgroup} of $B_{2n}$, denoted by $K_{2n}$, which is generated by
 braids as follow \cite{Brendle-Hatcher2008}:
\begin{align*}
\sigma_{2i-1}, \quad 
 \sigma_{2j}\sigma_{2j-1}\sigma_{2j+1}\sigma_{2j}, \quad 
 \sigma_{2j}\sigma_{2j-1}\sigma^{-1}_{2j+1}\sigma^{-1}_{2j},
\end{align*}
where $i \in \{1, \dots, n\}$ and $j \in \{ 1, \dots, n-1\}$; see Figure~\ref{20240707-2}.
 We call an element of $K_{2n}$ an {\it adequate braid} and one of the generators an \textit{elementary} adequate braid.

\begin{figure}[h]
    \centering
    \includegraphics[width = 0.6\hsize]{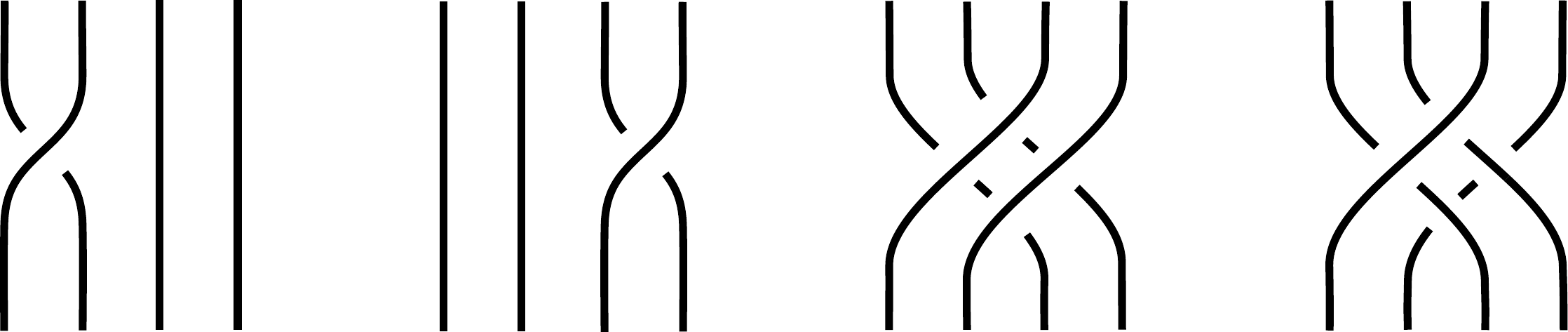}
    \caption{Elementary adequate braids presented by $\sigma_{2i-1}$ (left), $\sigma_{2j}\sigma_{2j-1}\sigma_{2j+1}\sigma_{2j}$ (middle), and $\sigma_{2j}\sigma_{2j-1}\sigma^{-1}_{2j+1}\sigma^{-1}_{2j}$ (right). We omit the $k$th strings for $k \neq 2j-1, 2j, 2j+1, 2j+2$.} 
    \label{20240707-2}
\end{figure}

\subsection{Stabilization of $2n$-braids}
A \textit{stabilizer} of length $n$ is an $n$-tuple  $\lambda$ consisting of non-negative integers.
For a stabilizer $\lambda = (l_1, \dots, l_n)$, we put 
$\mu_i = n + \sum_{j=1}^i l_i$ ($i = 0, 1, \dots, n$), and we put $\nu_\lambda = \mu_n$.
We denote by $\gamma_i$ an adequate $2\nu_\lambda$-braid as follows:
\[
\gamma_i=\sigma_{2i}\sigma_{2i+1}\sigma_{2i-1}\sigma_{2i}  \in K_{2\nu_\lambda}.
\]
Then we define $2\nu_\lambda$-braids $T_{i}^\lambda$ and $T^\lambda$ by:
\begin{align*}
   & T_i^\lambda=~ \prod_{k=i}^{n-1} \gamma_{k} \cdot \prod_{k=n}^{\mu_{i-1}-1} \gamma_{k}^{-1} \in K_{2\nu_\lambda} \quad (i \in \{1, 2, \ldots, n\}),\\
  &  T^\lambda =~ \prod_{i=0}^{n-1} T_{i+1}^\lambda \cdot
    \sigma_{2\mu_{i}}\sigma_{2(\mu_{i}+1)}\cdots \sigma_{2(\mu_{i+1}-1)}\,
   \cdot  (T_{i+1}^\lambda)^{-1} \in K_{2\nu_\lambda}. 
\end{align*}
For a $2n$-braid $\beta$ and a stabilizer $\lambda$ of length $n$, we  take the inclusion
$\iota: B_{2n} \to B_{2 \nu_\lambda}$
sending $\sigma_i$ to $\sigma_i$ $(i=1, \ldots, 2n-1)$. 
 The \textit{$\lambda$-stabilized braid} of $\beta$, denoted by $\beta^\lambda$, is the $2 \nu_\lambda$-braid given by $\beta^\lambda = \iota(\beta) \cdot T^\lambda$.
We also say that $\beta^\lambda$ is obtained from $\beta$ by \textit{$\lambda$-stabilization} or simply \textit{stabilization}.
Note that the plat closures $\mathrm{pcl}(\beta)$ and $\mathrm{pcl}(\beta^\lambda)$ are isotopic in $\mathbb{R}^3$.

\begin{example}
    We take a stabilization of length $n$, $\lambda = (0, \dots, 0, m)$, for a positive integer $m$.
    Then, for a $2n$-braid $\beta$, the $\lambda$-stabilized braid $\beta^\lambda$ is the $2(n+m)$-braid $\iota(\beta) \cdot \sigma_{2n} \sigma_{2n} \dots \sigma_{2(n+m-1)}$, where $\iota: B_{2n} \to B_{2(n+m)}$ is the inclusion.
    We call this stabilization a \textit{canonical stabilization}.
\end{example}

The set of stabilizations of length $n$ has a directed partial ordering $\preceq$ defined as follows:
For stabilizations $\lambda = (l_1, \dots, l_n)$ and $\lambda' = (l'_1, \dots, l'_n)$, $\lambda \preceq \lambda'$ if $l_i \leq l'_i$ for $i = 1, \dots, n$.

\begin{proposition}[\cite{Yasuda21}-Proposition~4.2]\label{Proposition: Birman's criterion for links in plat forms}
    Let $\beta_1$ and $\beta_2$ be a $2n_1$-braid and a $2n_2$-braid, respectively.
    Then, the plat closure $\mathrm{pcl}(\beta_1)$ is isotopic to the plat closure $\mathrm{pcl}(\beta_2)$ by an ambient isotopy of $\mathbb{R}^3$ if and only if there exists a stabilizer $\lambda$ of length $n_1$ satisfying the following condition:
    For any stabilizer $\lambda_1$ satisfying $\lambda \preceq \lambda_1$, there exists a stabilizer $\lambda_2$ of length $n_2$ with $\nu_{\lambda_2} = \nu_{\lambda_1}=:n$ such that $\beta_1^{\lambda_1}$ and $\beta_2^{\lambda_2}$
    belong to the same double coset of $B_{2n}$ modulo the Hilden subgroup $K_{2n}$.
\end{proposition}

\subsection{Banded link presentations for surfaces properly embedded in $\mathbb{R}^4_+$}

Let $L$ and $O$ be links in $\mathbb{R}^3$ such that the union $L \cup O$ is a split union of $L$ and $O$, and let $\mathcal{B}$ be a set of mutually disjoint bands attaching to $L \cup O$.

We denote by $h(L \cup O; \mathcal{B})$ the link obtained from $L \cup O$ by band surgery along bands of $\mathcal{B}$.
A triple $(L, O, \mathcal{B})$ is called \textit{admissible} if both $O$ and $h(L \cup O; \mathcal{B})$ are trivial links.

Let $(L, O, \mathcal{B})$ be an admissible triple and $\mathbf{D}$ (respectively $\mathbf{d}$) the union of mutually disjoint disks in $\mathbb{R}^3$ with $\partial \mathbf{D} = h((L \cup O; \mathcal{B})$ (respectively $\partial \mathbf{d} = O$).
Let $p: \mathbb{R}^4_+ = \mathbb{R}^3\times [0, \infty) \to [0, \infty)$ be the projection.
For a surface $F$ properly embedded in $\mathbb{R}^3 \times [0, \infty)$, we denote by $F_{[t]}$ the slice $p(F \cap (\mathbb{R}^3 \times \{t\}))$.
We define the standard \textit{realizing surface} of $(L, O, \mathcal{B})$, denoted by $F(L,O, \mathcal{B})$, as
a surface $F$ properly embedded in $\mathbb{R}^4_+$ given by:
\begin{align*}
    F_{[t]} ~=~
    \begin{cases}
           L  & (0 \leq t < 1),\\
        L \cup \mathbf{d}   & (t = 1),\\
         L \cup O            & (1 < t < 2),\\
         L \cup O \cup (\cup\mathcal{B})   & (t = 2),\\
        h(L \cup O; \mathcal{B})       & (2 < t < 3),\\
        \mathbf{D}          & (t = 3),\\
        \emptyset           & (t>3),
    \end{cases}
\end{align*}
where $\cup \mathcal{B}$ is the union of bands of $\mathcal{B}$.
Regard $\mathbb{R}^4_+$ as a fiber bundle $\mathbb{R}^3 \times [0, \infty)$ with the fiber $\mathbb{R}^3$.
Let $F'$ be a surface related to $F(L, O, \mathcal{B})$ by a fiber-preserving ambient isotopy of $\mathbb{R}^4_+$. We denote by $(L', O', \mathcal{B}')$ the admissible triple related to $(L, O, \mathcal{B})$. Then, $F'$ is also called a {\it realizing surface} of $(L', O', \mathcal{B}')$, denoted by the same notation $F(L', O', \mathcal{B}')$.
For a realizing surface $F(L, O, \mathcal{B})$, let $t_*$ be the $I_4$ coordinate such that the slice at $t=t_*$ contains the union of the bands $\cup \mathcal{B}$. Then, the set of disks properly embedded in $\mathbb{R}^3 \times (0, t_*]$ (respectively $\mathbb{R}^3 \times [t_*, \infty)$)) whose boundaries are the trivial link $O$ (respectively $h(L \cup O; \mathcal{B}))$ is called a {\it trivial disk system} with respect to $O$ (respectively $h(L \cup O; \mathcal{B})$).
It is known \cite{KSS} that the isotopy class of $F(L, O, \mathcal{B})$ does not depend on the choice of trivial disk systems. It is also known \cite{KSS} that if two admissible triples $(L, O, b)$ and $(L', O', b')$ are isotopic, then their realizing surfaces are isotopic.
In the proof of Theorem \ref{Theorem: Alexander theorem for surfaces in 4-ball}, we also use the following.

\begin{theorem}[\cite{KSS, Meier}]\label{Theorem: KSS for compact surface}
    Every surface properly embedded in $\mathbb{R}^4_+$ is isotopic  to a realizing surface of some admissible triple $(L, O, \mathcal{B})$.
\end{theorem}
For a surface $F$ properly embedded in $\mathbb{R}^4_+$, we call an admissible triple $(L, O, \mathcal{B})$ a {\it banded link presentation} of $F$ if the associated realizing surface $F(L, O, \mathcal{B})$ is isotopic to $F$. 
We also use the following lemma. For a link $L$ in $\mathbb{R}^3$ and a set $\mathcal{B}$ of mutually disjoint bands attaching to $L$, we call the pair $(L, \mathcal{B})$ a {\it banded link} in $\mathbb{R}^3$  \cite{Rudolph}.  

\begin{lemma}[\cite{Yasuda21}-Lemma~4.6]\label{Lemma: Deformation to normal banded braid form}
    By using an isotopy of $\mathbb{R}^3$, any banded link $(L,\mathcal{B})$ in $\mathbb{R}^3$ is deformed to a banded link $(L_0, \mathcal{B}_0)$ satisfying the following conditions:
    \begin{enumerate}[$(1)$]
        \item There exists a disk $D$ in $\mathbb{R}^2$ and a $2m_0$-braid $\beta_0$ in $D\times I$ for some positive integer $m_0$ such that $\beta_0 = L_0\cap (D\times I)$ and $\mathrm{pcl}(\beta_0) = L_0$.
        \item There exist mutually disjoint $m$ subcylinders $U_i$ ($i = 1, \ldots, m$) in $D \times I$ such that each $U_i$ contains a part of $L_0$ as a pair of vertical line segments and a half-twisted band $b_i \in \mathcal{B}_0$ as in Figure~\ref{20241017}, where $m$ is the number of bands belonging to $\mathcal{B}_0$ and a subcylinder of $D\times I$ means a product of a 2-disk $d$ in $D$ and a closed interval $[s,t]$ in $I$.
    \end{enumerate}
\end{lemma}

\begin{figure}[h]
    \centering
    \includegraphics[height=1.5cm]{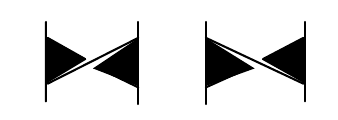}
    \caption{The part $(L_0 \cup b_i) \cap U_i$.}
    \label{20241017}
\end{figure}

In the proof of Theorem~\ref{Theorem: Alexander theorem for surfaces in 4-ball}, we identify $\mathbb{R}^4_+$ with $\mathrm{Int}(D^2\times I_3) \times [0,1)$ to apply these facts.

\subsection{Basic knitted surfaces}
In this subsection, we introduce several braided surfaces and knitted surfaces.
We recall that for a knitted surface $S$ and $t \in I_4$, we denote by $S_{[t]}$ the slice $p(S \cap (D^2 \times I_3 \times \{t\}))$ of $S$.
Furthermore, in this subsection and the proof of Theorem~\ref{Theorem: Alexander theorem for surfaces in 4-ball}, we assume that every knitted surface $S$ of degree $n$ satisfies $S \cap (D^2 \times \partial I_3 \times I_4) = Q_n \times \partial I_3 \times I_4$.
Namely, we assume that the slice $S_{[t]}$ is a knit for each $t \in I_4$.

\subsubsection{Braided surfaces associated with stabilization}

Let $\lambda$ be a stabilizer of a $2n$-braid. 
Let $\mathcal{B}$ be the set of bands attached to the braid $T^\lambda=\prod_{i=0}^{n-1} T_{i+1,\, \mu_i-1}\, \cdot
    \sigma_{2\mu_i}\sigma_{2(\mu_i+1)}\cdots \sigma_{2(\mu_{i+1}-1)} \cdot T_{i+1,\, \mu_i-1}^{-1}$ such that each band corresponds to  one $\sigma_{2j}$ of $\sigma_{2\mu_i}\sigma_{2(\mu_i+1)}\cdots \sigma_{2(\mu_{i+1}-1)}$ and $h(T^\lambda; \mathcal{B})=\prod_{i=0}^{n-1} T_{i+1,\, \mu_i-1} \cdot T_{i+1,\, \mu_i-1}^{-1}$ is equivalent to $e$.
    We take an isotopy $\{\beta_t\}_{t \in [0,1/4]}$ such that $\beta_0=e$ and $\beta_{1/4}=h(T^\lambda; \mathcal{B})$.
We define a braided surface \textit{associated with $\lambda$-stabilization} by the braided surface $S$ of degree $2\nu_\lambda$ given by
\begin{align*}
    S_{[t]} ~=~
    \begin{cases}
           \beta_t  & (0 \leq t <\frac{1}{4}),\\
                      h(T^\lambda; \mathcal{B})   & (\frac{1}{4}\leq t < \frac{1}{2}),\\
        T^\lambda   \cup (\cup \mathcal{B})    & (t = \frac{1}{2}),\\
        T^\lambda       & (\frac{1}{2} <t \leq 1),
    \end{cases}
\end{align*}
see Figure~\ref{20241012-1-2}.
 Further, we define a braided surface \textit{associated with $\lambda$-destabilization} by the surface $S'$ given by $S'_{[t]}=S_{[1-t]}$ for $t \in I$.

\begin{figure}[h]
    \centering
    \includegraphics[width = 0.7\hsize]{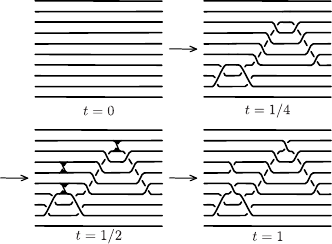}
    \caption{A motion picture of a braided surface associated with $\lambda$-stabilization for $\lambda = (2,1)$. }
    \label{20241012-1-2}
\end{figure}

\subsubsection{Knitted surfaces associated with adequate braids}
Let $\gamma$ be an elementary adequate $2n$-braid, that is, $\gamma$ is either $\sigma_{2i-1}$, $\sigma_{2j}\sigma_{2j-1}\sigma_{2j+1}\sigma_{2j}$, or $\sigma_{2j}\sigma_{2j-1}\sigma^{-1}_{2j+1}\sigma^{-1}_{2j}$ for some $i \in \{1, \dots, n\}$ and $j \in \{1, \dots, n-1\}$.
Then, we define the knitted surface \textit{associated with $\gamma$} is as the knitted surface $S$ given by
\begin{align*}
    S_{[t]} ~=~
    \begin{cases}
           \prod_{k=1}^{n} \tau_{2k-1} \cdot \gamma  & (t=0),\\
            \prod_{k=1}^{n} \tau_{2k-1}  & (t=1),
    \end{cases}
\end{align*}
 and $S_{[t]}$, $t \in I$ are as in  Figures~\ref{20240604-4}, \ref{20240604-2}, or \ref{20240604-3}.  

By using an isotopy of $D^2 \times B^2$ fixing $\partial (D^2 \times B^2)$ pointwise, $S$ is deformed to the surface $S$ described as in Figures~\ref{20240604-4}, \ref{20240609-1}, or \ref{20240609-2} so that $S$ does not have maximal/minimal disks and bands in $S_{[t]}$ for $t \in I_4$. This can be shown by considering 3-cell moves; the detail is left to the reader.

\begin{figure}[h]
    \centering
    \includegraphics{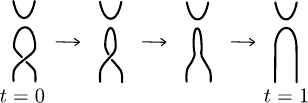}
    \caption{A motion picture of a knitted surface associated with $\gamma = \sigma_{2i-1}$ ($i \in \{1, \dots, n\}$), where we omit the $j$th strings for $j \neq i, i+1$.}
    \label{20240604-4}
\end{figure}

\begin{figure}[h]
    \centering
    \includegraphics[width = \hsize]{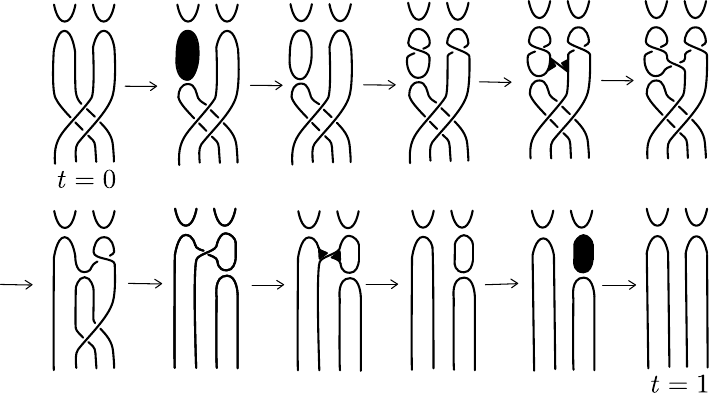}
    \caption{A motion picture of a knitted surface associated with $\gamma = \sigma_{2j}\sigma_{2j-1}\sigma_{2j+1}\sigma_{2j}$ ($j \in \{1, \dots, n-1\}$), where we omit the $k$th strings for $k \notin \{2i-1, 2i, 2i+1, 2i+2\}$. }
    \label{20240604-2}
\end{figure}

\begin{figure}[h]
    \centering
    \includegraphics[width = \hsize]{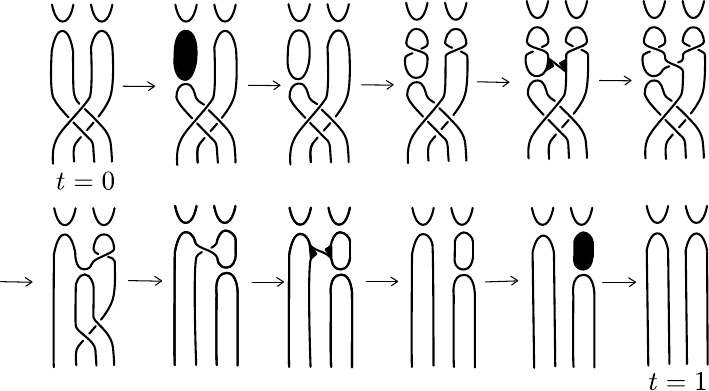}
    \caption{A motion picture of a knitted surface associated with $\gamma = \sigma_{2j}\sigma_{2j-1}\sigma_{2j+1}^{-1}\sigma_{2j}^{-1}$ ($j \in \{1, \dots, n-1\}$), where we omit the $k$th strings for $k \notin \{2i-1, 2i, 2i+1, 2i+2\}$.}
    \label{20240604-3}
\end{figure}

Let $\gamma$ be an adequate braid presented by $\gamma=\gamma_1, \dots, \gamma_m$, where $\gamma_i$ $(i=1,\ldots, m)$ is an elementary adequate braid.
Let $S_i$ be the knitted surface associated with $\gamma_i$ $(i=1,\ldots, m)$, and let $S_i'$ $(i=1,\ldots, m)$ be the knitted surface given by
\[
(S_i')_{[t]}= (S_i)_{[t]}\cdot \prod_{j=i+1}^m \gamma_j   \quad (t\in[0,1]).
\]

 We remark that 
 \begin{eqnarray*}
&&(S_1')_{[0]} = \prod_{k=1}^{n} \tau_{2k-1} \cdot (\gamma_1\cdots \gamma_m), \\
& &(S_i')_{[1]} =(S_{i+1})_{[0]}=\prod_{k=1}^{n} \tau_{2k-1} \cdot \prod_{j=i+1}^m \gamma_j \quad (i=1, \ldots, m-1),\\
&& (S_m')_{[1]}  = \prod_{k=1}^{n} \tau_{2k-1}.
 \end{eqnarray*}
Then we define a knitted surface \textit{associated with $\gamma$} by the knitted surface $S$ given by a  horizontal product $S=S'_1 \cdot S'_2 \cdots S'_m$.

For an elementary adequate braid $\gamma = \sigma_{2j-1}$ ($j \in \{1, \dots, n\}$), the associated knitted surface $S$ does not have maximal/minimal disks or bands.
For an elementary adequate braid $\gamma = \sigma_{2j}\sigma_{2j-1}\sigma_{2j+1}\sigma_{2j}$ (respectively $\sigma_{2j}\sigma_{2j-1}\sigma^{-1}_{2j+1}\sigma^{-1}_{2j}$) ($j \in \{1, \dots, n-1\}$), the associated knitted surface $S$ has one maximal disk, one minimal disk, and two bands;
by using an isotopy of $D^2 \times B^2$ fixing $\partial (D^2 \times B^2)$ pointwise, $S$ is deformed to a surface as in Figure~\ref{20240609-1} (respectively  Figure~\ref{20240609-2}) which does not have  maximal/minimal disks and bands.
Therefore, we have the following proposition.

\begin{proposition}\label{Proposition: Elementary knitted surface associated with adequate braid}
    For any adequate braid $\gamma$, a knitted surface $S$ associated with $\gamma$ is ambiently isotopic rel $\partial$ to a surface $S'$ without maximal/minimal disks and bands.
\end{proposition}

\begin{figure}[h]
    \centering
    \includegraphics*{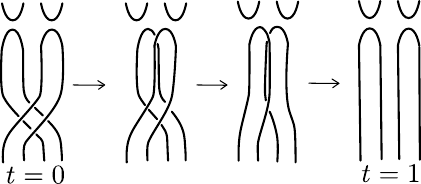}
    \caption{A motion picture from $\tau (\sigma_{2i}\sigma_{2i-1}\sigma_{2i+1}\sigma_{2i})$ to $\tau$ ($i \in \{1, \dots, n-1\}$), where we omit the $j$th strings for $j \notin \{2i-1, 2i, 2i+1, 2i+2\}$.}
    \label{20240609-1}
\end{figure}

\begin{figure}[h]
    \centering
    \includegraphics*{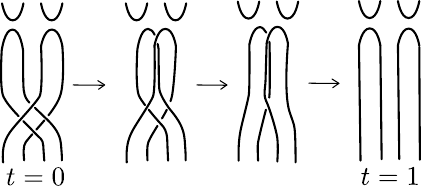}
    \caption{A motion picture from $\tau (\sigma_{2i}\sigma_{2i-1}\sigma_{2i+1}^{-1}\sigma_{2i}^{-1})$ to $\tau$ ($i \in \{1, \dots, n-1\}$), where we omit the $j$th strings for $j \notin \{2i-1, 2i, 2i+1, 2i+2\}$. }
    \label{20240609-2}
\end{figure}

\subsection{Proof of Theorem~\ref{Theorem: Alexander theorem for surfaces in 4-ball}}
Let $F$ be a surface properly embedded in $D^2 \times B^2 = D^2 \times I_3 \times I_4$.
By Theorem~\ref{Theorem: KSS for compact surface}, $F$ is  isotopic to a realizing surface of an admissible triple $(L, O, \mathcal{B})$.
We construct a knitted surface $S_*$ in $D^2\times B^2$ from $(L, O, \mathcal{B})$ and then we show that $S_*$ and $F$ are isotopic.

We construct $S_*$ in such a way as follows.
Let $n_*$ be the number of components of the trivial link $O$, $\beta_0$ a braid such that $\mathrm{pcl}(\beta_0) \cong L$.
Put $2n_0 =\deg (\beta_0)$ and $n_1 = n_0 + n_*$.
Let $\beta_1$ be the braid obtained from $\beta_0$ by adding trivial $2n_*$ strands: $\beta_1=\iota(\beta_0)$, where $\iota$ is the natural inclusion $\iota: B_{2n_0} \to B_{2n_1}$.
Applying Lemma~\ref*{Lemma: Deformation to normal banded braid form}, we obtain a $2n_2$-braid $\beta_2$ and a set $\mathcal{B}_0$ of bands attaching to $\beta_2$ such that $(\mathrm{pcl}(\beta_2), \mathcal{B}_0) \cong (L \cup O, \mathcal{B})$ and each band of $\mathcal{B}_0$ is in the form as described in Lemma \ref{Lemma: Deformation to normal banded braid form} (2).
Let $n_3$ be the number of components of the trivial link $h(\mathrm{pcl}(\beta_2); \mathcal{B}_0)$ and let $\beta_3$ be the trivial $2n_3$-braid. 
Notice that $\mathrm{pcl}(\beta_1)$ and $\mathrm{pcl}(\beta_3)$ are isotopic to $L \cup O$ and $h(L \cup O; \mathcal{B})$, respectively.

We apply Proposition~\ref{Proposition: Birman's criterion for links in plat forms} to two pairs $(\beta_1, \beta_2)$, $(h((\beta_2; \mathcal{B}_0), \beta_3)$ so that we obtain an integer $n \geq \{n_1, n_2, n_3\}$, adequate $2n$-braids $\gamma'$, $\gamma$, $\delta$, $\delta'$ and stabilizers $\lambda_i$ ($i = 1, 2, 3$) such that $\nu_{\lambda_i }= n$ ($i = 1, 2, 3$) and
\[
    \beta_1^{\lambda_1} ~=~ \gamma\,\beta_2^{\lambda_2}\,\gamma', \quad
   h (\beta_2;\mathcal{B}_0)^{\lambda_2} = \delta\,\beta_3^{\lambda_3} \,\delta' \quad \mathrm{in} \quad B_{2n},
\]
where $h(\beta_2; \mathcal{B}_0)$ is the braid  obtained from $\beta_2$ by band surgery along bands of $\mathcal{B}_0$.
Further, we take $\lambda_0 = (0, \dots, 0, n - n_1)$, a canonical stabilizer. 

For an integer $1 \leq i < j \leq n$, we put $\tau_{(i,j)} = \prod_{k = i}^{j} \tau_{2k-1}$ and $\tau = \tau_{(1, n)}$. 
We construct knitted surfaces $S_1, S_2, \dots, S_{10}$ of degree $2n$, and then take their horizontal product. We define $S_1, \ldots, S_{10}$ as follows (see Figures~\ref{20240609-3-f} and \ref{20240609-3-l}).

\noindent (1) 
Let $S$ be the braided surface associated with $\lambda_0$-destabilization.
Then, we define the knitted surface $S_1$ by
\[
(S_1)_{[t]}= (S)_{[t]}\cdot \iota(\beta_0)   \quad (t\in[0,1]),
\]
where $\iota: B_{2n_0} \to B_{2n}$ is the natural inclusion.  
The slice $(S_1)_{[1/2]}$ is the union of the braid $\iota(\beta_0)$ and attached bands.
We denote the set of bands in $(S_1)_{[1/2]}$ by $\Delta_0$.

\noindent (2)
Let $S$ be the knitted surface presented by
\[
e \to \tau,
\]
and let $S'$ be the knitted surface presented by
\[
e \to \tau_{(1, n_1)}.
\]
Then, we define $S_2$ by
\[
(S_2)_{[t]}= S_{[t]} \cdot \iota(\beta_0) \cdot S'_{[t]}   \quad (t\in[0,1]).
\]
We remark that  $(S_2)_{[1]} = \tau \cdot \iota(\beta_0) \cdot \tau_{(1, n_1)}$.
We denote by $\Delta_1$ the union of bands corresponding to saddle points of $S_2$.

\noindent (3)
Let $S$ be the knitted surface presented by
\[
\tau \to \tau \cdot \tau_{(n_1+1, n)}.
\]
Then, we define $S_3$ by
\[
(S_3)_{[t]}= S_{[t]} \cdot \iota(\beta_0)  \cdot \tau_{(1, n_1)} \quad (t\in[0,1]).
\]
We remark that $(S_3)_{[1]} = \tau \cdot  \tau_{(n_1+1, n)} \cdot \iota(\beta_0) \cdot \tau_{(1, n_1)}$.

\noindent (4)
The knits $\tau_{(n_1+1, n)}$ and $\iota(\beta_0)\cdot \tau_{(1, n_1)}$ are commutative, that is, $\tau_{(n_1+1, n)} \cdot \iota(\beta_0) \cdot \tau_{(1, n_1)} \cong \iota(\beta_0) \cdot \tau_{(1, n_1)} \cdot \tau_{(n_1+1, n)}$.
Let $S_4$ be the knitted surface presented by
\[
\tau \cdot  \tau_{(n_1+1, n)} \cdot \iota(\beta_0) \cdot \tau_{(1, n_1)} \to
 \tau \cdot  \iota(\beta_0) \cdot \tau.
\]

\noindent (5)
Let $S$ be the knitted surface associated with $\lambda_1$-stabilization of $\beta_1$. Remark that $S_{[0]}=\iota(\beta_1)=\iota (\beta_0) \in B_{2n}$, where the first $\iota$ (respectively the second $\iota$) are the inclusion from  $B_{2n_0}$ (respectively $B_{2n_1}$) to $B_{2n}$.
Then we define $S_5$ by
\[
(S_5)_{[t]}=  \tau  \cdot S_{[t]} \cdot \tau \quad (t\in[0,1]).
\]

\begin{figure}[h]
    \centering
    \includegraphics*[width = \hsize]{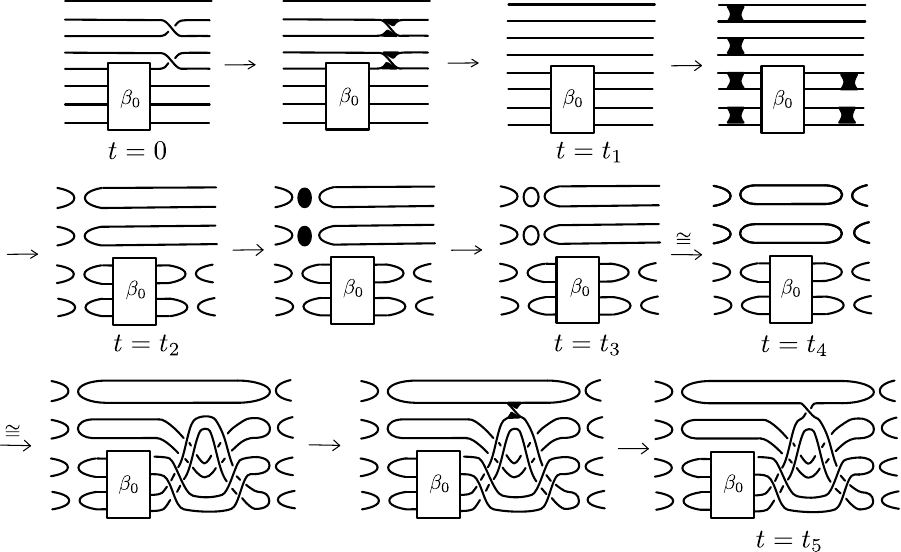}
    \caption{The motion picture of $S$ for $t \in [0, t_5]$, where $(n_0, n_1) = (2,3)$, $\lambda_0 = (0,2)$, and $\lambda_1 = (1,0,0)$. }
    \label{20240609-3-f}
\end{figure}

\noindent (6)
We define
knitted surfaces $S_6^-$ and $S_6^+$ as follows.
Since $\beta_1^{\lambda_1} = \gamma \beta_2^{\lambda_2} \gamma'$, let $S$ be a braided surface without branch points presented by
 \[
 S_{[0]} = \beta_1^{\lambda_1} \to  \gamma \beta_2^{\lambda_2} \gamma' =S_{[1]}.
 \]
Then we define $S_6^-$ by
\[
(S_6^-)_{[t]}=  \tau  \cdot S_{[t]} \cdot \tau \quad (t\in[0,1]).
\]

Next, let $S_-$ be the knitted surface associated with $\gamma$, and let $S_{+}^*$ be the mirror image of the knitted surface associated with $\gamma'$ with respect to $D^2 \times \{1/2\} \times I_4$.
Then we define $S_6^+$ by
\[
(S_6^+)_{[t]}=  (S_-)_{[t]}  \cdot \beta_2^{\lambda_2} \cdot (S_{+}^*)_{[t]} \quad (t\in[0,1]).
\]

Then we define $S_6$ as the horizontal product
\[
S_6=S_6^- \cdot S_6^+.
\]

\noindent (7)
We define $S_7$ by
\begin{align*}
    (S_7)_{[t]} ~=~
    \begin{cases}
        \tau \beta_2^{\lambda_2} \tau & (1 \leq t < \frac{1}{2}).\\
        \tau \beta_2^{\lambda_2} \tau  \cup (\cup \mathcal{B}_0) & (t = \frac{1}{2}),\\
        \tau \cdot h(\beta_2^{\lambda_2}; \mathcal{B}_0) \cdot \tau     & (\frac{1}{2} < t \leq 1).
    \end{cases}
\end{align*}

\noindent (8)
We construct $S_8$ by a method similar to (6).
Since $h(\beta_2^{\lambda_2}; \mathcal{B}_0) = \delta \beta_3^{\lambda_3} \delta'$,
let $S$ be the knitted surface
presented by
 \[
 S_{[0]} =h (\beta_2^{\lambda2}; \mathcal{B}_0)\overset{\cong}{\to} \delta \beta_3^{\lambda_3} \delta' =S_{[1]}.
 \]
Then we define $S_8^-$ by
\[
(S_8^-)_{[t]}=  \tau \cdot S_{[t]}  \cdot \tau \quad (t\in[0,1]).
\]
Next, let $S_-$ be the knitted surface associated with $\delta$, and let $S_{+}^*$ be the mirror image of the knitted surface associated with $\delta'$ 
with respect to $D^2 \times \{1/2\} \times I_4$.
Then we define $S_8^+$ by
\[
(S_8^+)_{[t]}=  (S_-)_{[t]}  \cdot \beta_3^{\lambda_3} \cdot (S_{+}^*)_{[t]} \quad (t\in[0,1]).
\]
We denote by $\Delta_3$ the union of bands corresponding to the saddle points of $S_8$.
Then we define $S_8$ as the horizontal product
\[
S_8 = S_8^- \cdot S_8^+.
\]

\noindent (9)
Let $S$ be the braided surface associated with $\lambda_3$-destabilization of $e \in B_{n_3}$, the trivial braid of $n_3$ strands.
We define $S_9$ by
\[
(S_9)_{[t]} = \tau \cdot  S_{[t]}  \cdot \tau \quad (t\in[0,1]).
\]

\noindent (10)
We define $S_{10}$ as the knitted surface presented by
\[
\tau \tau \to  \tau.
\]

\begin{figure}[h]
    \centering
    \includegraphics*[width = \hsize]{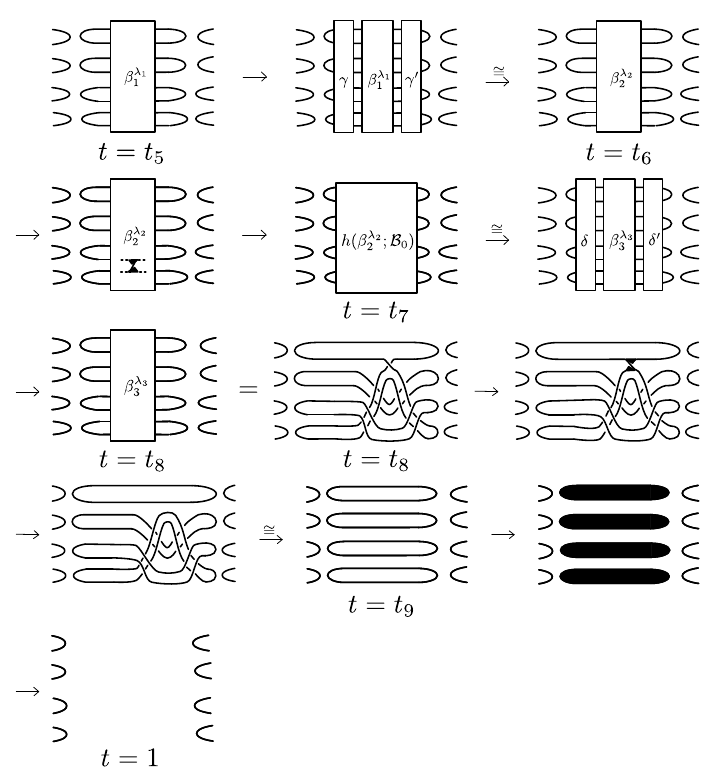}
    \caption{The motion picture of $S$ for $t \in [t_5, 1]$, where $n_3=2$ and $\lambda_3 = (1,1)$. }
    \label{20240609-3-l}
\end{figure}

Let $S_*$ be the horizontal product of $S_1, \dots, S_{10}$:
\[
S_*=S_1 \cdot S_2 \cdots S_{10}.
\]
Let $0 = t_0 < t_1 < \dots < t_{10} = 1$ be the partition of $I_4$ such that $S_* \cap (D^2\times I_3 \times [t_{i-1}, t_i]) = S_i$ for each $i \in \{ 1, \dots, 10\}$.

Next we show that $S_*$ is isotopic to a realizing surface of the admissible triple $(L, O, \mathcal{B})$. We put $S=S_*$.
We arrange the $I_4$ coordinates so that
 maximal/minimal disks and saddle bands of $S_*$ are contained in slices $S_{[t]}$ $(t \in I_4)$ as follow:
\begin{itemize}
    \item Saddle bands $\Delta_0$ at $t = t_1/2$.
    \vspace{0.1cm}
    \item Saddle bands $\Delta_1$ at  $t = t_1$.
    \vspace{0.1cm}
    \item Minimal disks $\mathbf{d}$ at $t = (t_2+t_3)/2$.
    \vspace{0.1cm}
    \item  Saddle bands $\Delta_2$ at $t=(t_4+t_5)/2$.
    \vspace{0.1cm}
    \item Saddle bands and maximal/minimal disks at several $t \in [t_5, t_6]$.
     \vspace{0.1cm}
    \item Saddle bands $\mathcal{B}_0$ at $t=(t_6+t_7)/2$.
    \vspace{0.1cm}
    \item Saddle bands and maximal/minimal disks at several $t \in [t_7, t_8]$.
    \vspace{0.1cm}
    \item Saddle bands $\Delta_3$ at $t=(t_8+t_9)/2$.
    \vspace{0.1cm}
    \item Maximal disks $\mathbf{D}$ at $t=(t_9+t_{10})/2$.
\end{itemize}

\noindent \textbf{(Step 1)}
Let $\tau_{(i,j)}^+$ (respectively $\tau_{(i,j)}^-$) be the union of hooks of $\tau_{(i,j)}$ containing minimal points (respectively  maximal points).
We put
\[
\tau^+ = \tau_{(1,n)}^+, \ \tau^- = \tau_{(1,n)}^-.
\]
We see that $\tau \beta \tau=\tau^- \cup \mathrm{pcl}(\beta) \cup \tau^+$ for any knit $\beta$, where $\mathrm{pcl}(\beta)$ denotes the plat closure. We present the knit $\tau^- \cup \mathrm{pcl}(\beta) \cup \tau^+$ by $\tau^- \cdot  \mathrm{pcl} (\beta) \cdot \tau^+$.

For each $t \in [t_4, t_9]$, there exists a unique $2n$-knit $\beta'_t$ such that the slice $S_{[t]}$ is in one of the forms as follow:
\[
S_{[t]}= \begin{cases}
\tau^- \cdot \mathrm{pcl}(\beta'_t)\cdot  \tau^+ \\
\tau^- \cdot \mathrm{pcl}(\beta'_t)\cdot  \tau^+ \cup (\cup \mathcal{B})  \\
 \tau^- \cdot \mathrm{pcl}(\beta'_t)\cdot  \tau^+ \cup \mathbf{D}, \quad
 \end{cases}
 \]
 where $\mathcal{B}$ is a set of attaching bands and $\mathbf{D}$ is a union of maximal/minimal disks; see Figure \ref{20241013-4}. 
By arranging the $I_3$ coordinates, we assume that $S$ is in the form so that there exists a small $s_*>0$ 
satisfying
\begin{align*}
  &  S \cap (D^2 \times [0,s_*]\times [t_4, 1]) = \tau^- \times [t_4, 1], \\
  &  S \cap (D^2 \times [1-s_*, 1]\times [t_4, 1]) = \tau^+ \times [t_4, 1].
\end{align*}

\begin{figure}[h]
    \centering
    \includegraphics{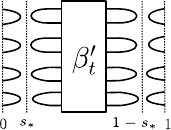}
    \caption{The slice  $S_{[t]}=\tau^- \cdot \mathrm{pcl}(\beta_t') \cdot \tau^+$. }
    \label{20241013-4}
\end{figure}

\noindent \textbf{(Step 2)}
We consider slices $S_{[t]}$ for $t \in [(t_5+t_6)/2, t_6]$ which describes the knitted surface $S_6^-$ constructed in (6).
By Proposition~\ref{Proposition: Elementary knitted surface associated with adequate braid},
we deform $S$ by an isotopy relative to the complement of $\mathrm{Int}(D^2 \times [s_*, 1-s_*] \times [(t_5+t_6)/2, t_6])$, so that $S_{[t]}$ has no maximal/minimal disks and bands for $t \in [t_5, t_6]$.
Similarly, using Proposition~\ref{Proposition: Elementary knitted surface associated with adequate braid}, we deform $S$ by an isotopy relative to the complement of $\mathrm{Int}(D^2 \times [s_*, 1-s_*] \times [(t_7+t_8)/2, t_8])$, so that $S_{[t]}$ has no maximal/minimal disks and bands for $t \in [t_7, t_8]$.

\noindent \textbf{(Step 3)}
We consider $S_{[t]}$ for $t \in [0, t_4]$.
We assume that $\Delta_0$ is contained in $S_{[t_1/2]}\cap(D^2 \times (s_*, 1- s_*) \times \{t_1/2\})$.
Then we deform $S$ by an isotopy of $D^2 \times I_3 \times [t_1, t_4]$ so that $S$ satisfies
\begin{align*}
S \cap (D^2 \times [0, s_*] \times ((t_1+t_2)/2, t_4]) = \tau_- \times ((t_1+t_2)/2, t_4], \\
S \cap (D^2 \times [1-s_*, 1] \times ((t_1+t_2)/2, t_4]) = \tau_+ \times ((t_1+t_2)/2, t_4].
\end{align*}
Let $C_0$ be the union of 3-cells given by $\Delta_0 \times [(t_0 + t_1)/2, t_1]$ and $C_1$ the union of 3-cells given by $\Delta_1 \times [t_1, (t_1 + t_2)/2]$.
Then we deform $S$ by 3-cell moves along $C_0$ and $C_1$ so that the slice of $S$ for $t \in [0, t_4]$ is as follows (see Figure~\ref{20240610-1}):
\begin{align*}
    S_{[t]} ~=~
    \begin{cases}
        \beta_0^{\lambda_0} & (0 \leq t < t_1),\\
        \tau^-\cdot \mathrm{pcl}(\beta_0) \cdot \tau^+ \cup \Delta_0 \cup \Delta_1 & (t = t_1),\\
        \tau^-\cdot \mathrm{pcl}(\beta_0) \cdot \tau^+
         &(t_1 < t < \frac{t_2 + t_3}{2}),\\
        \tau^-\cdot \mathrm{pcl}(\beta_0) \cdot \tau^+ \cup \mathbf{d} & (t = \frac{t_2 + t_3}{2}),\\
        \tau^-\cdot \mathrm{pcl}(\beta_0) \cdot \tau^+ & (\frac{t_2 + t_3}{2} < t \leq t_4). \\
    \end{cases}
\end{align*}

\begin{figure}[h]
    \centering
    \includegraphics*[width = \hsize]{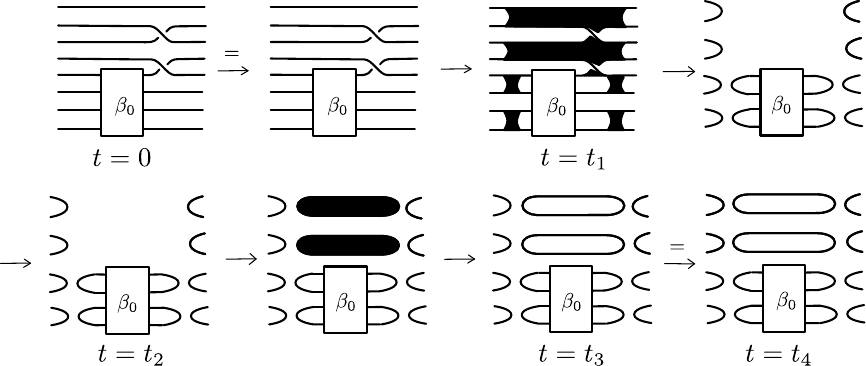}
    \caption{The motion picture of $S$ for $t \in [0, t_4]$. }
    \label{20240610-1}
\end{figure}

\noindent \textbf{(Step 4)}
We consider $S_{[t]}$ for $t \in [t_2, t_5]$.
Let $L'$ be a trivial link in $D^2 \times I_3 \times \{(t_4 + t_5)/2\}$ such that $L'$ is the subset of $S_{[(t_4+t_5)/2]}$ coming from the trivial link $\partial \,\mathbf{d}$ in $S_{[(t_2+t_3)/2]}$ by the isotopy determined from the slices $\{S_{[t]}\}$.
Let $\mathbf{d}'$ be a union of 2-disks such that $\partial \, \mathbf{d}' = L'$ and $\mathrm{Int}( \mathbf{d}')$ is disjoint from $S_{[(t_4 + t_5)/2]}$.
Put $\mathbf{d}'_2 = \mathbf{d}' \cup \Delta_2$.
Then, $\mathbf{d}'_2$ is a union of minimal disks and 2-cells attaching to $\mathrm{pcl}(\beta'_{(t_4 + t_5)/2})$. 
We define the subset $\mathbf{d}_2$ of $\mathbf{d}$ 
 as the union of minimal disks of $\mathbf{d}$ corresponding to the union of minimal disks of $\mathbf{d}'$ through the slices $\{S_{[t]}\}$.
The slices $\{S_{[t]}\}$ for $t \in [(t_2 + t_3)/2, (t_4 + t_5)/2]$ and the union of 2-cells of $\mathbf{d}'$ give an isotopy carrying $\mathrm{pcl}(\beta_1)$ to $\mathrm{pcl}(\beta_1^{\lambda_1})$. 
Hence, we deform $S \cap (D^2 \times I_3 \times [t_2, t_5])$ by an  isotopy rel $\partial$ so that
\begin{align*}
    S_{[t]} ~=~
    \begin{cases}
        \beta_0^{\lambda_0} & (0 \leq t < t_1),\\
        \tau^-\cdot \mathrm{pcl}(\beta_0) \cdot \tau^+ \cup \Delta_0 \cup \Delta_1 & (t = t_1),\\
        \tau^-\cdot \mathrm{pcl}(\beta_0) \cdot \tau^+ &(t_1 < t < \frac{t_2 + t_3}{2}),\\
        \tau^-\cdot \mathrm{pcl}(\beta_0) \cdot \tau^+ \cup \mathbf{d}_2 &(t = \frac{t_2 + t_3}{2}),\\
        \tau^- \cdot L_t \cdot \tau^+ & (\frac{t_2 + t_3}{2} < t \leq t_5),\\
    \end{cases}
\end{align*}
where $\{L_t\}_{t \in [(t_2 + t_3)/2, t_5]}$ is the isotopy with  $L_{[(t_2+t_3)/2, t_5]}=\mathrm{pcl}(\beta_1)$ induced from the slices and the union of 2-cells (see Figure~\ref{20241014-3}).

\begin{figure}[h]
    \centering
    \includegraphics*[width = \hsize]{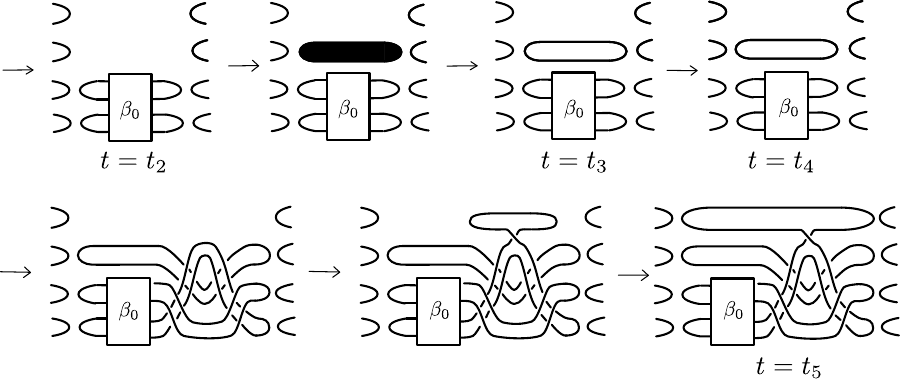}
    \caption{The motion picture of $S$ for $t \in [t_2, t_5]$. }
    \label{20241014-3}
\end{figure}

\noindent \textbf{(Step 5)}
We consider $S_{[t]}$ for $t \in [t_8, t_{10}]$.
Put $\beta'_t=\emptyset$, the empty set, for $t \in (t_9, t_{10}]$. Then, together with the braids $\beta'_t$ $(t \in [t_8, t_9])$ given in Step 1, $S_{[t]} = \tau \beta'_t \tau$ for $t \in [t_8, t_{10}]$. 
We remark that $\beta'_{(t_8 + t_9)/2} = T^{\lambda_3} \cup \Delta_3$. 
Let $\mathbf{D}'$ be a union of mutually disjoint disks in $D^2\times I_3 \times \{t\}$ for $t = (t_8 + t_9)/2$ such that $\mathbf{D}' \cap (\tau \beta'_t \tau) = \partial \mathbf{D}' = \mathrm{pcl}(h(T^\lambda_3;\Delta_3))$
and $\mathrm{Int}( \mathbf{D}')$ do not intersect with $\Delta_3$. 
We put $\mathbf{D}_2 =\Delta_3 \cup \mathbf{D}'$.
We define a surface $S'$ properly embedded in $D^2 \times I_3 \times [t_8, t_{10}]$ by
\begin{align*}
    S'_{[t]} ~=~
    \begin{cases}
        \tau \beta'_t \tau & (t_8 \leq t < \frac{t_8 + t_9}{2}),\\
        (\tau \beta'_t \tau) \cup \mathbf{D}_2 & ( t = \frac{t_8 + t_9}{2}),\\
        \tau & (\frac{t_8 + t_9}{2} < t \leq t_{10}).
    \end{cases}
\end{align*}
Let $\mathbf{D}_*$ and $\mathbf{D}'_*$ be the components of $S \cap D^2 \times I_3 \times [t_8, t_{10}]$ associated with $\mathbf{D}$ and $\mathbf{D}'$, respectively. The sets $\mathbf{D}_*$ and $\mathbf{D}'_*$ are trivial disk systems of $\mathrm{pcl}(h(T^{\lambda_3}; \Delta_3))$ in $D^2 \times I_3 \times [t_8, t_{10}]$; hence they are isotopic in $D^2 \times I_3 \times [t_8, t_{10}]$ rel $D^2 \times I_3 \times \{t_8\}$ (see \cite{KSS}).
We deform $S$ by an isotopy so that $S_{[t]} = S'_{[t]}$ for $t \in [t_8, t_{10}]$.
 
\noindent \textbf{(Step 6)}
Maximal/minimal disks and saddle bands of the original $S=S_*$ are now in the form of 2-disks contained in slices $S_{[t]}$ $(t \in I_4)$ as follow:
\begin{itemize}

\item Saddle bands and 2-disks $\Delta_0 \cup \Delta_1$ at  $t=t_1$.
\vspace{0.1cm}
\item Minimal disks $\mathbf{d}_2$ at $t=(t_2+t_3)/2$.
\vspace{0.1cm}
\item
Saddle bands $\mathcal{B}_0$ at $t = (t_6 + t_7)/2$.
\vspace{0.1cm}
\item Maximal disks $\mathbf{D}_2$ at $t=(t_8+t_{9})/2$. 
\end{itemize}

Let $C$ be a union of 3-cells in $D^2 \times B^2$ defined as follows.
A \textit{shadow disk} of a simple arc $l$ properly embedded in $D^2 \times I$ is a 2-disk $d$ embedded in $D^2 \times I$ such that $\partial d$ is a union of $l$ and a simple arc in $\partial (D^2 \times I)$.
Let $\mathbf{d}^+$ (respectively $\mathbf{d}^-$) be the union of connected components of a disjoint union of shadow disks of $\tau^+$ (respectively $\tau^-$) in $D^2 \times [1-s_*, 1]$ (respectively $D^2 \times [0, s_*]$) such that $\Delta_0 \cup \Delta_1$ is disjoint from the interior of $\mathbf{d}^+$ (respectively $\mathbf{d}^-$).
Then we define $C$ by
\[
C= (\mathbf{d}^+ \cup  \mathbf{d}^-)\times I_4 \,\cup\, (\Delta_0 \cup \Delta_1) \times [0, t_1], 
\]
see Figure \ref{20241013-5}. 
\begin{figure}[h]
    \centering
    \includegraphics[width = \hsize]{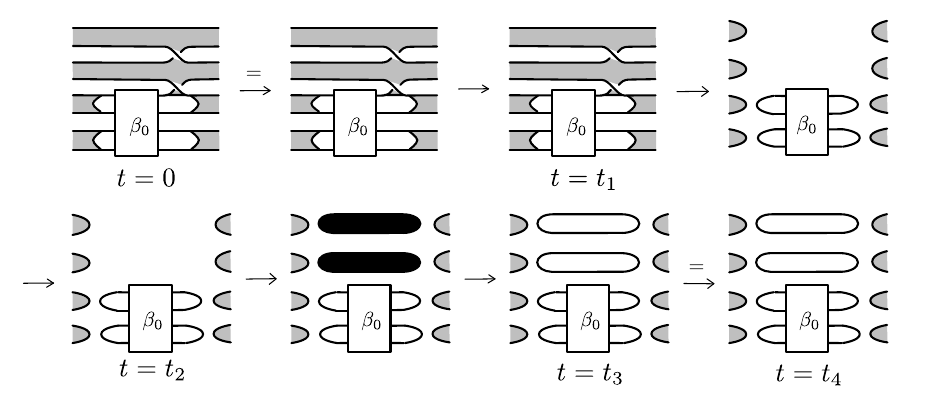}
    \caption{The motion picture of the 3-cell $C$ (gray regions). }
    \label{20241013-5}
\end{figure}
We see that the intersection of $S$ and each component of $C$ is a 2-disk.
We deform $S$ by 3-cell moves along $C$ so that $\partial S = \mathrm{pcl}(\beta_0) \times \{0\} \subset (D^2 \times I_3) \times I_4$.
Notice that these moves delete 2-disks $\Delta_0 \cup \Delta_1$ in the slice $S_{[t_1]}$.

Therefore, the resulting surface, also denoted by $S$, is described by
\begin{align*}
    S_{[t]} =
    \begin{cases}
        \mathrm{pcl}(\beta_0) & (0 \leq t < \frac{t_2 + t_3}{2}),\\
        \mathrm{pcl}(\beta_0) \cup \mathbf{d} & (t = \frac{t_2 + t_3}{2}),\\
        \mathrm{pcl}(\beta'_t) & (\frac{t_2 + t_3}{2} < t < \frac{t_6 + t_7}{2}),\\
       \mathrm{pcl}(\beta'_t) \cup (\cup \mathcal{B}_0) & (t = \frac{t_6 + t_7}{2}),\\
        \mathrm{pcl}(\beta'_t) & (\frac{t_6 + t_7}{2} < t < \frac{t_8 + t_9}{2}),\\
        \mathrm{pcl}(\beta'_t) \cup \mathbf{D} & (t = \frac{t_8 + t_9}{2}),\\
        \emptyset & (\frac{t_8 + t_9}{2} < t \leq 1), 
    \end{cases}
\end{align*}
see Figures \ref{20241014-1} and \ref{20241014-2}. 

\begin{figure}[h]
    \centering
    \includegraphics[width = \hsize]{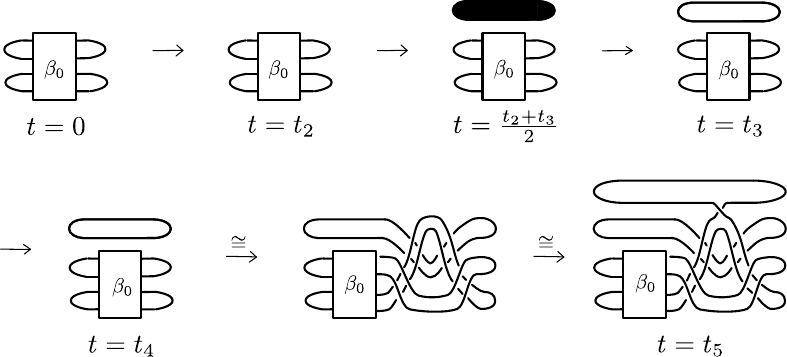}
    \caption{The motion picture of $S$ for $t \in [0,t_5]$. }
    \label{20241014-1}
\end{figure}

\begin{figure}[h]
    \centering
    \includegraphics[width = \hsize]{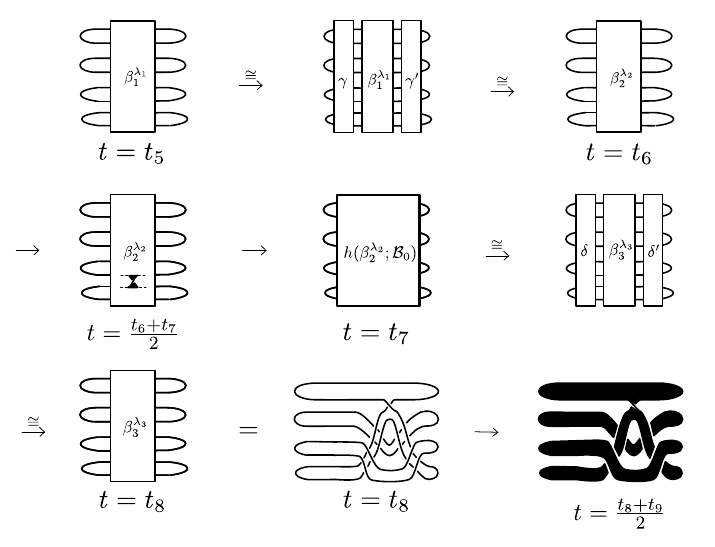}
    \caption{The motion picture of $S$ for $t \in [t_5, (t_8+t_9)/2]$. }
    \label{20241014-2}
\end{figure}

We see that $\mathrm{pcl}(\beta'_{(t_6 + t_7)/2}) = \mathrm{pcl}(\beta_2^{\lambda_2})$.
We denote by $L_*$ and $O_*$ the links isotopic to $\mathrm{pcl}(\beta_0)$ and the trivial link $\partial (\cup \mathbf{d})$, respectively, related by the isotopy $\{S_{[t]}\}$, such that $L_* \cup O_*=\mathrm{pcl}(\beta_2^{\lambda_2})$.
The knitted surface $S$ is a realizing surface of the admissible triple $(L_*, O_*, \mathcal{B}_0)$.
Since $(\mathrm{pcl}(\beta_2^{\lambda_2}), \mathcal{B}_0)
\cong (L \cup O, \mathcal{B})$, it follows that
$(L_*, O_*, \mathcal{B}_0) \cong (L, O, \mathcal{B})$, and hence $S=F(L_*, O_*, \mathcal{B}_0) \cong F(L, O, \mathcal{B}) \cong F$.
Thus, any surface $F$ is isotopic to a knitted surface.
\qed

\section{BMW chart moves}\label{sec7-1}
In this section, we give the definition of BMW chart moves, that are local moves of BMW charts of degree $n$ such that their associated BMW charts are equivalent. This includes C-moves of charts of 2-dimensional braids \cite{Kamada02, Kamada96, Kamada92}. See also \cite{Kamada-Matumoto} for charts and C-moves for (non-simple) braided surfaces.

For a BMW chart $\Gamma$ of degree $n$, we denote by $\Gamma_\tau$ the subgraph of $\Gamma$ consisting of $\tau$-edges and the connecting vertices, ignoring the labels of edges, which will be called the {\it $\tau$-chart }of $\Gamma$.
Regarding diagonal edges around each crossing as one edge so that each crossing is not a vertex but the intersection point of a pair of edges, we regard $\Gamma_\tau$ as consisting of intervals, trivalent graphs and circles, and their subgraphs obtained from taking the  intersection with $B^2$, each of which will be called a {\it component} of $\Gamma_\tau$.

\begin{definition}\label{def6-2}
Let $\Gamma$ and $\Gamma'$ be BMW charts of degree $n$ in a 2-disk $B^2$.
Then, a {\it BMW chart move}, or simply a {\it chart move} or a {\it C-move}, is a local move in a 2-disk $N \subset B^2$ such that $\Gamma$ (respectively $\Gamma'$) is changed to $\Gamma'$ (respectively $\Gamma$), satisfying the following.

\begin{enumerate}
\item
The boundary $\partial N$ does not contain vertices of $\Gamma$ and $\Gamma'$, and $\Gamma \cap \partial N$ (respectively $\Gamma' \cap \partial N$) consists of transverse intersection points of edges of $\Gamma$ (respectively $\Gamma'$) and $\partial N$.

\item
The charts are identical in the complement of $N$:
$\Gamma \cap (B^2\backslash N)=\Gamma' \cap (B^2\backslash N)$.

\item
The charts in $N$, $\Gamma \cap N$ and $\Gamma' \cap N$, satisfy one of the following.
\begin{enumerate}
\item[(T)]
C-moves for $\tau$-charts.

The charts $\Gamma \cap N$ and $\Gamma' \cap N$ consist of $\tau$-edges and vertices: $\Gamma \cap N=\Gamma_\tau \cap N$ and $\Gamma' \cap N=\Gamma'_\tau \cap N$, and they are as in Figure \ref{T1-T4}.
We call the chart moves (T1)--(T6), respectively.
In particular, we call (T2) a {\it YU-move}, and (T4) an {\it IH-move}.

\item[(CI)]
C-moves for charts with no black $\sigma$-vertices.

The charts $\Gamma \cap N$ and $\Gamma' \cap N$ do not contain black $\sigma$-vertices, and each component of the $\tau$-charts $\Gamma_\tau \cap N=\Gamma'_\tau \cap N$ has an intersection with $\partial N$.

\item[(CII)]
C-moves for charts with a black $\sigma$-vertex (1).

The chart $\Gamma \cap N$ contains a black $\sigma$-vertex and a crossing, and the chart $\Gamma' \cap N$ is the result of moving the black $\sigma$-vertex to delete the crossing, as depicted in the upper figure in Figure \ref{Chartmove23}, where the orientations are allowed for all possible cases:
the orientations of $\sigma$-edges in the figure are for example.

\item[(CIII)]
C-moves for charts with a black $\sigma$-vertex (2).

The chart $\Gamma \cap N$ contains a black $\sigma$-vertex and
a white $\sigma$-vertex, and the chart $\Gamma' \cap N$ is the result of moving the black $\sigma$-vertex to delete the white vertex, as depicted in the lower figure in Figure \ref{Chartmove23}, where the orientations are allowed for all possible cases:
the orientations of $\sigma$-edges in the figure are for example.
 \end{enumerate}
\end{enumerate}

We call the chart moves {\it T, CI, CII, CIII-moves}, respectively.
\end{definition}

\begin{figure}[ht]
\includegraphics*[height=8cm]{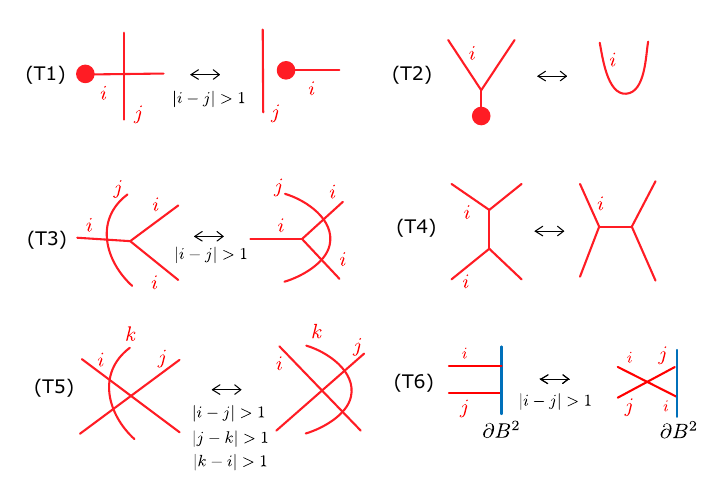}
\caption{T-moves (T1)--(T6). }
\label{T1-T4}
\end{figure}

\begin{figure}[ht]
\includegraphics*[height=5cm]{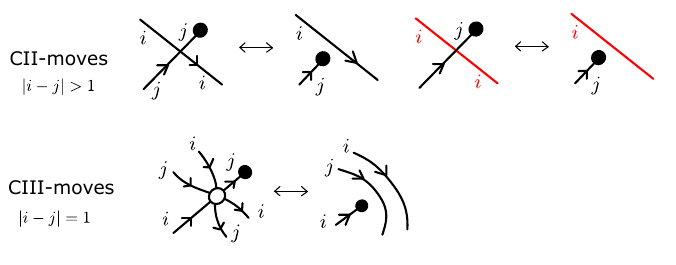}
\caption{CII-moves and CIII-moves, where the orientations are for example. }
\label{Chartmove23}
\end{figure}

\begin{definition}
Let $\Gamma$ and $\Gamma'$ be BMW charts of degree $n$ in a 2-disk $B^2$.
We say that $\Gamma$ and $\Gamma'$ are {\it BMW chart-move equivalent} or simply {\it C-move equivalent} if they are related by a finite sequence of BMW chart moves and ambient isotopies of $B^2$.
\end{definition}

We give a set of explicit CI-moves as follows; see Claim \ref{claim9-2}.  

\begin{definition}\label{c1moves}
We define chart moves (A1)--(A9) (generators of $\sigma$-CI-moves), (B1)--(B3), (C1)--(C3), (D1)--(D10), (E1)--(E3) and (F1)--(F3), as follow,  where the orientations are allowed for all possible cases:
the orientations of $\sigma$-edges in the figures are for example.

\begin{enumerate}[(A)]
\item
CI-moves for $\sigma$-edges.
\\
The local moves as in Figure \ref{Chartmove1}. There are 9 types.
They are generators of CI-moves for the original charts of braided surfaces.
\item
CI-moves for charts with a black $\tau$-vertex.
\\
The local moves as in Figure \ref{B1-B3}. There are 3 types.
\item
CI-moves for charts with a boundary point.
\\
The local moves as in Figure \ref{C1-C32}. There are 3 types.
\item
CI-moves for charts with a $\tau$-edge.
\\
The local moves as in Figure \ref{D1-D7}. There are 10 types.

\item
 CI-moves for charts with a trivalent $\tau$-vertex.
\\
The local moves as in Figure \ref{E1-E3}. There are 3 types.
\item
CI-moves for a $\tau$-crossing.
\\
The local moves as in Figure \ref{F1-F32}. There are 3 types.
\end{enumerate}
\end{definition}

\begin{figure}[ht]
\includegraphics*[height=11cm]{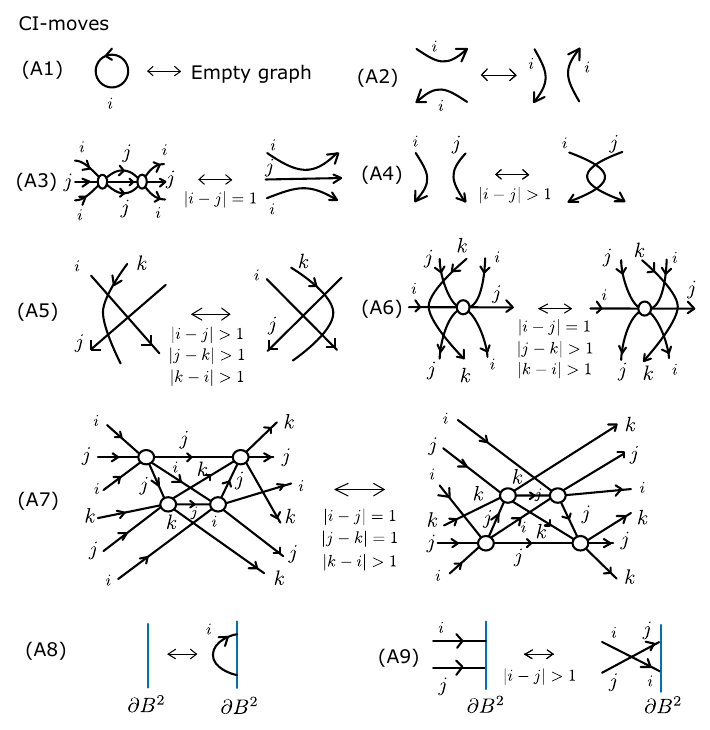}
\caption{CI-moves (A1)--(A9), where the orientations are for example. }
\label{Chartmove1}
\end{figure}

\begin{figure}[ht]
\includegraphics*[height=6cm]{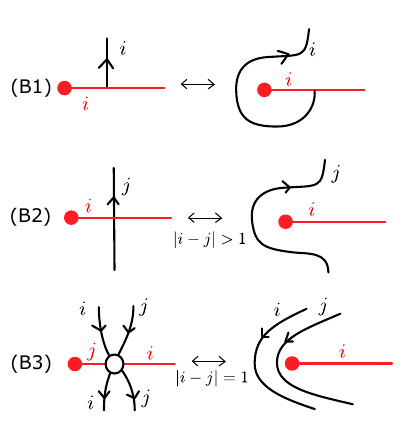}
\caption{CI-moves (B1)--(B3), where the orientations are for example. }
\label{B1-B3}
\end{figure}

\begin{figure}[ht]
\includegraphics*[height=6cm]{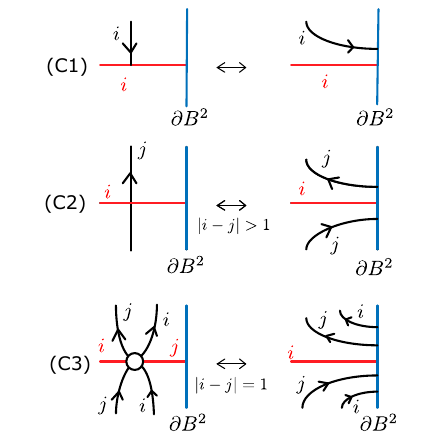}
\caption{CI-moves (C1)--(C3), where the orientations are for example.}
\label{C1-C32}
\end{figure}

\begin{figure}[ht]
\includegraphics*[height=13cm]{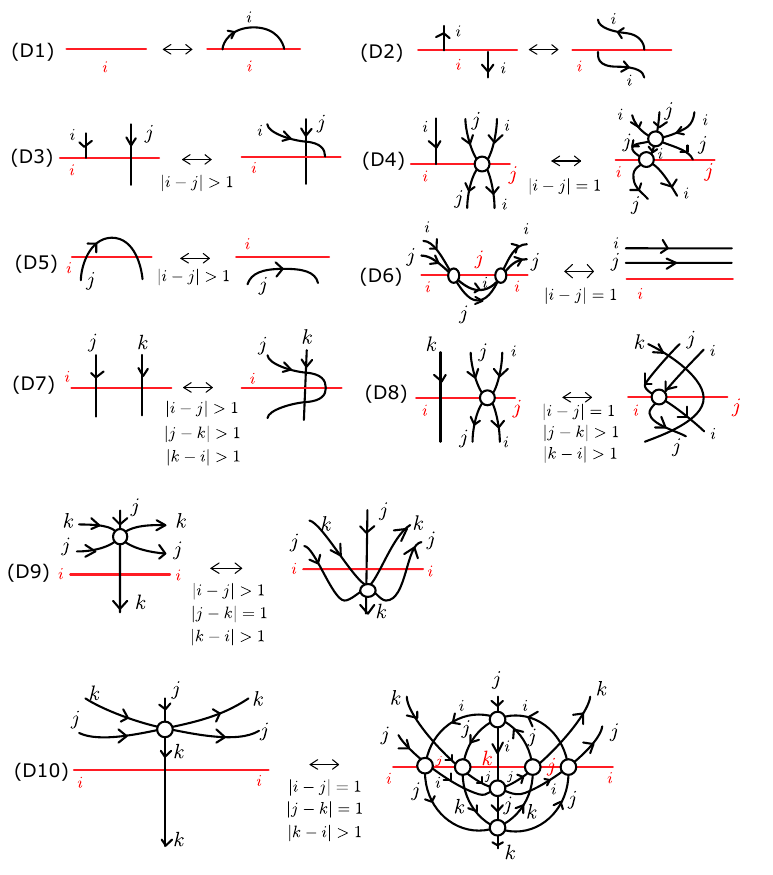}
\caption{CI-moves (D1)--(D10), where the orientations are for example.}
\label{D1-D7}
\end{figure}

\begin{figure}[ht]
\includegraphics*[height=5cm]{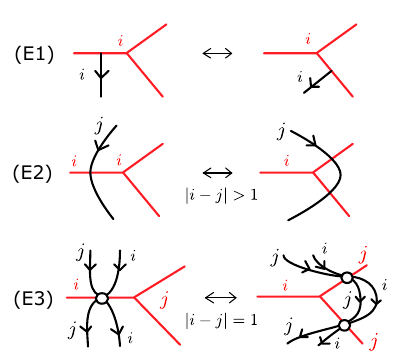}
\caption{CI-moves (E1)--(E3), where the orientations are for example.}
\label{E1-E3}
\end{figure}

\begin{figure}[ht]
\includegraphics*[height=8cm]{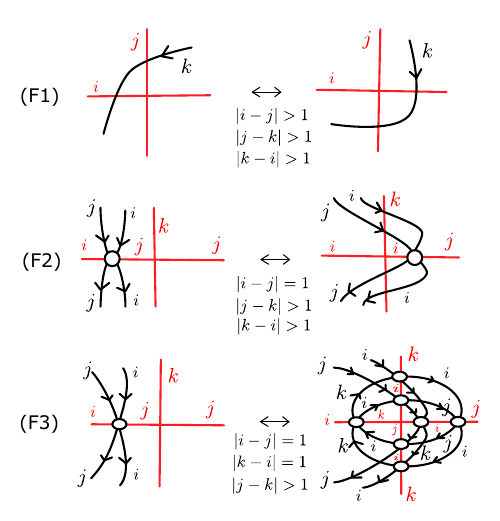}
\caption{CI-moves (F1)--(F3), where the orientations are for example.}
\label{F1-F32}
\end{figure}

\begin{remark}

 The (D10) and (F3)-moves are analogues of the (A7)-move: the chart obtained from (D10)/(F3) by changing $\tau$-edges into $\sigma$-edges are essentially the same with the (A7)-move; we leave the detailed
 explanation as an exercise to the reader.
 The (D5)-move is obtained as a combination of the (E2)-move and the YU-move (namely the (T2)-move).
 \end{remark}

\begin{theorem}\label{thm7-3}
Two knitted surfaces of degree $n$ are equivalent 
if their presenting charts are related 
by a finite number of C-moves consisting of T, CII, CIII-moves and 
CI-moves of type (A1)--(F3). 
\end{theorem}
 
 \begin{proof}
 Let $\Gamma$ and $\Gamma'$ be charts of degree $n$ related by one of C-moves in question, and let $S$ and $S'$ be the knitted surfaces of degree $n$ presented by $\Gamma$ and $\Gamma'$, respectively. 
 If $\Gamma$ and $\Gamma'$ are related by one of the original C-moves, which is the case when $\Gamma$ and $\Gamma'$ do not contain $\tau$-edges, then $S$ and $S'$ are equivalent \cite{Kamada02, Kamada92}; hence, there exists an isotopy of braided surfaces connecting $S$ and $S'$. Since there is a surjection from the set of $n$-braids to the set of $n$-knits which is given by changing a crossing to a hook pair, the isotopy induces an isotopy of knitted surfaces presented by C-moves analogous to the original C-moves. Here, a C-move analogous to an original C-move $\mathcal{R}$ is the local move obtained from $\mathcal{R}$ by changing several $\sigma$-edges into $\tau$-edges. 
 It remains to show for the cases which involve vertices of degree 3: (T2), (T3), (T4), (B1), (C1), (D1), (D2), (D3), (D4), (E1)--(E3). 
 For these cases, by describing $S$ and $S'$ by motion pictures, we can construct a 3-cell such that $S'$ is obtained from $S$ by a 3-cell move which induces an isotopy of knitted surfaces. Thus $S$ and $S'$ are equivalent, and we have the required result. 
\end{proof}

\section{Knitted surfaces of degree 2 and BMW charts presenting trivial surface-knots}\label{sec8}

In this section, we consider knitted surfaces of degree 2.
We give a chart of degree 2 such that the plat closure of its knitted surface is a given trivial surface-knot (Theorem \ref{prop6-5}). In this section, a chart of degree 2 is called a {\it $2$-chart}.

For a chart, 
we call an edge a {\it free edge} if its endpoints are black vertices. 
We call an edge a {\it half edge} if exactly one endpoint is a black vertex: the other endpoint is either a boundary point  or a trivalent vertex.  
When a half $\sigma$-edge has an orientation toward (respectively from) the black vertex, we call it a {\it positive} (respectively {\it negative}) half $\sigma$-edge.

Let $S$ be a knitted surface of degree 2 in $D^2 \times B^2$ such that the boundary $\partial S \subset D^2 \times \partial B^2$ is the closure of a 2-braid $\beta=\sigma_1^n$  for some integer $n$.
Let $A$ be a band with $n$ half twists, embedded in $D^2 \times \partial B^2$, such that $\partial A=\partial S$.
We define the {\it plat closure} of $S$ by the surface-link $S \cup A \subset \mathbb{R}^4$, determined from the natural inclusion $D^2 \times B^2 \subset \mathbb{R}^4$. Further, for a knitted surface $S$ such that $\partial S$ is the plat closure of a 2-braid, we define the {\it plat closure} of $S$ as the surface-link obtained from $S$ by pasting an embedded disk $A \subset D^2 \times \partial B^2$ to $\partial S$.

A {\it standard 2-sphere} (respectively a {\it standard torus}) in $\mathbb{R}^4$ is the boundary of a 3-ball (respectively an unknotted solid torus) in $\mathbb{R}^3 \times \{0\}$, and a standard positive projective plane (respectively a {\it standard negative projective plane}) is the knotted torus as illustrated in (1) (respectively (2)) in Figure \ref{20240330-2}.
A surface-knot in $\mathbb{R}^4$ is {\it trivial} if it is a connected sum of a finite number of standard surfaces in $\mathbb{R}^4$. 
We denote by $T$ a standard torus, and by $P_+$ (respectively $P_-$) a standard positive (respectively negative) projective plane.

\begin{theorem}\label{prop6-5}
Let $S$ be a trivial surface-knot, which is the connected sum of $g$ copies of $T$ and $m$ copies of $P_+$ and $n$ copies of $P_-$.
Then $S$ is the plat closure of the knitted surface presented by a $2$-chart $\Gamma$ in $B^2$ such that $\Gamma$ consists of $\min\{m,n\}$ copies of free $\sigma$-edges, $g$ copies of free $\tau$-edges,
and $|m-n|$ copies of positive (respectively negative) half $\sigma$-edges connected with $\partial B^2$ if $m \geq n$ (respectively $m<n$).
\end{theorem}

\begin{figure}[ht]
\includegraphics*[height=3cm]{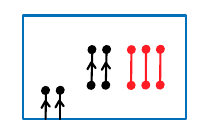}
\caption{Example of a $2$-chart whose plat closure is a  trivial surface-knot, where we omit the labels, which are all one. 
}
\label{20240331-1}
\end{figure}

\begin{example}
Let $\Gamma$ be a 2-chart as given in Figure \ref{20240331-1}. Then, the plat closure of the presented knitted surface is a trivial surface-knot given as a connected sum of 3 copies of $T$, 4 copies of $P_+$ and 2 copies of $P_-$. 
\end{example}

\begin{proof}[Proof of Theorem \ref{prop6-5}]
First we observe that a standard 2-sphere is the plat closure of the 2-chart which is the empty graph.
An addition of a  free $\tau$-edge is an addition of a 1-handle between the first and the second sheets, and since we consider the plat closure, we see that the 1-handle is trivial; thus, taking a connected sum with one copy of $T$ is an addition of a free $\tau$-edge to the 2-chart.

Now we obtain 2-charts of a projective plane and a connected sum of projective planes, as follows.
Essentially our argument here is the same with that given in the proof of \cite[Theorem 5.5]{Yasuda21}.
Seeing Figure \ref{20240330-2}(1) and (2), we see that the middle part of (1) and (2) are presented by the 2-charts (3) and (4) in an annulus, respectively. By a (C1)-move, we move the half $\sigma$-edge so that the 2-charts are C-move equivalent to the 2-charts as in Figure \ref{20240330-3}(1) and (2), respectively. The projective planes are obtained by taking the plat closures, that is, by pasting disks to the boundaries, of the surfaces presented by the 2-charts. Taking a connected sum of projective planes corresponds to taking the sum of the 2-charts as indicated in Figure \ref{20240330-3}(3). Since the knitted surface presented by a chart on an annulus is presented by a chart in a disk as in the right figure of Figure \ref{20240330-5}, we see that projective planes and their connected sum is presented by the 2-charts as in Figure \ref{20240330-4}. Then, by (C1)-moves, we move the half $\sigma$-edges to be connected with $\partial B^2$ and then, since we consider the plat closure, we can regard $\partial B^2$ to be a $\tau$-edge and we remove the half $\tau$-edge by a YU-move.

Thus, the $2$-chart consists of $g$ copies of free $\tau$-edges
and $m$ (respectively $n$) copies of positive (respectively negative) half $\sigma$-edges. Then, applying (A2) and (D1)-moves to obtain free $\sigma$-edges, we have a 2-chart in the required form.
\end{proof}

\begin{figure}[ht]
\includegraphics*[height=6cm]{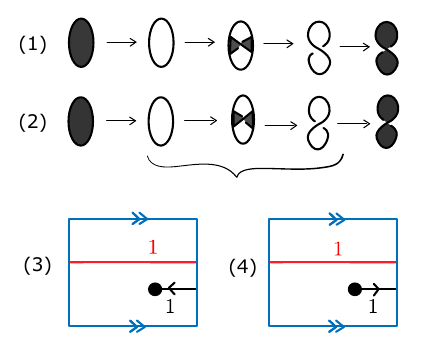}
\caption{(1) A standard positive projective plane $P_+$ and (2) a standard negative projective plane $P_-$.
The middle part of (1) and (2) are presented by the 2-charts (3) and (4) in an annulus, respectively.}
\label{20240330-2}
\end{figure}

\begin{figure}[ht]
\includegraphics*[height=3cm]{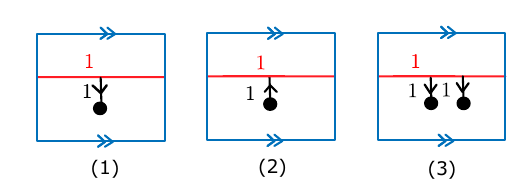}
\caption{Charts of degree 2 in an annulus whose closures present (1) a positive projective plane $P_+$, (2) a negative projective plane $P_-$, and (3) the connected sum $P_+ \# P_+$.}
\label{20240330-3}
\end{figure}

\begin{figure}[ht]
\includegraphics*[height=3cm]{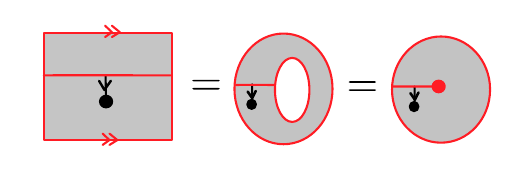}
\caption{Chart descriptions of the same knitted surface, where we focus on the surfaces presented by the charts in the grey regions.}
\label{20240330-5}
\end{figure}

\begin{figure}[ht]
\includegraphics*[height=3cm]{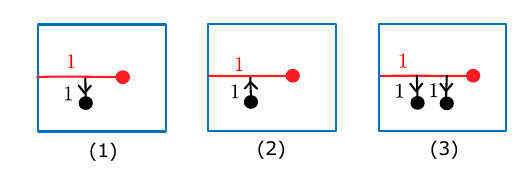}
\caption{Charts of degree 2 in $B^2$ whose plat closures present (1) a positive projective plane  $P_+$, (2) a negative projective plane $P_-$, and (3) the connected sum $P_+ \# P_+$.}
\label{20240330-4}
\end{figure}

We claim the following, which will be shown in a future paper.

\begin{claim}\label{thm6-7}
Let $S$ be the plat closure of the knitted surface presented by a chart of degree 2. Then, $S$ is a trivial surface-link.
\end{claim}

\section{Further results}\label{sec9}

A knitted surface $S$ is called a {\it 2-dimensional $n$-knit} if
$\partial S=Q_n \times \partial B^2$.
Let $S$ be 2-dimensional $n$-knit in $D^2 \times B^2$ and $S_0 = Q_n \times B^2$ be the trivial 2-dimensional $n$-braid.
Then, the {\it closure} of $S$ is a surface-link in $S^4$ defined as the union of $S$ and $S_0$ in $S^4= D^2\times B^2 \cup_\partial D^2 \times B^2$.
It is known \cite{Kamada94-2} that
every orientable surface-link is ambiently isotopic to the closure of some 2-dimensional braid.

In our future papers, we will show the followings. 
\begin{claim}\label{Theorem: Alexander theorem for surface-links}
    Every surface-link in $S^4$ is isotopic to the closure of some 2-dimensional knit.
\end{claim}
This claim will be shown by focusing on the fact that the plat closure $\mathrm{pcl}(\beta)$ of a braid $\beta$ is equivalent to the closure of the knit $\tau \beta$ (see Figure \ref{20241013-2}), and seeing that the analogous deformation holds also for the \lq\lq plat closure'' of braided surfaces \cite{Yasuda21}. \\

\begin{figure}[h]
    \centering
    \includegraphics[height=1.5cm]{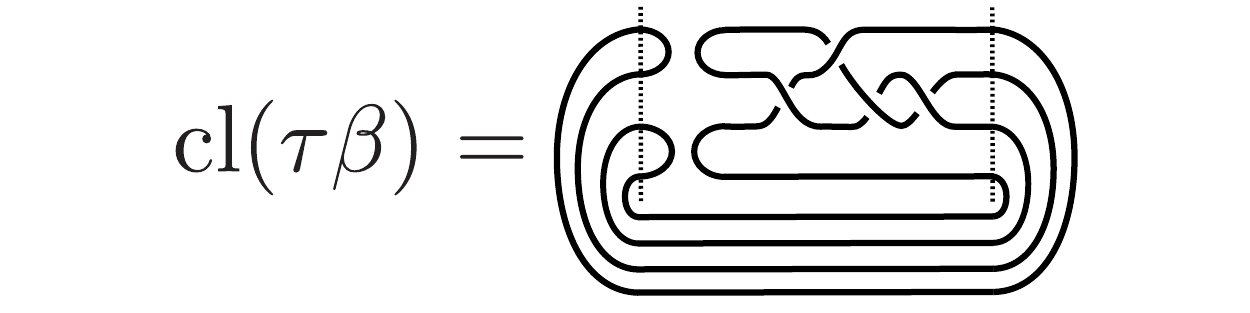}
    \caption{The closure $\mathrm{cl}(\tau\beta)$ of the knit $\tau\beta$, where $\beta$ is the braid of Figure \ref{20240707-1}. }
    \label{20241013-2}
\end{figure}

\begin{claim}\label{claim9-2}
CI-moves are generated by the moves {\rm (A1)--(F3)} given in Definition \ref{c1moves}.
\end{claim}

This claim will be shown by an argument as follows. 
When we have only $\sigma$-edges and vertices, then it is an original CI-move, which is generated by (A1)--(A9).
For the other cases which contain $\tau$-edges, we will  construct a sequence of moves consisting of (A1)--(F3), which relates the BMW charts. 
Further, we expect that 
if two knitted surfaces of degree $n$ are equivalent, then  their presenting charts are C-move equivalent.

\section*{Acknowledgements}
The first author was partially supported by JST FOREST Program, Grant Number JPMJFR202U. The second author was partially supported by JSPS KAKENHI Grant Number 22J20494.


\begin{thebibliography}{0}

\bibitem{Birman}
J.S. Birman, {\it New points of view in knot theory}, Bull. Amer. Math. Soc. (N.S.) {\bf 28} II (1993)
253–287.

\bibitem{Birman-Wenzl}
J. S. Birman, H. Wenzl, {\it Braids, link polynomial and a new algebra}, Trans. Amer. Math. Soc. {\bf 313} (1989) 249--273.

\bibitem{Brendle-Hatcher2008}
T. Brendle, A. Hatcher, {\it Configuration spaces of rings and wickets}, Comment. Math. Helv., {\bf 88} (2008) 131-162.

\bibitem{CKS}
J. S. Carter, S. Kamada, M. Saito,
{\it Surfaces in 4-space}, Encyclopaedia of Mathematical Sciences 142, Low-Dimensional Topology III, Berlin, Springer-Verlag, 2004.

\bibitem {Kamada02}
S. Kamada, {\it Braid and Knot Theory in Dimension Four}, Math. Surveys and Monographs 95, Amer. Math. Soc., 2002.

\bibitem{Kamada96}
S. Kamada, {\it An observation of surface braids via chart description}, J. Knot Theory Ramifications {\bf 5} (1996), no. 4, 517--529.

\bibitem{Kamada94}
S. Kamada, {\it A characterization of groups of closed orientable surfaces in 4-space}, Topology {\bf 33} (1994) 113--122.

\bibitem{Kamada94-2}
S. Kamada, {\it Alexander’s and Markov’s theorems in dimension four}, Bull. Amer. Math. Soc. (N.S.), {\bf 31} (1994), no. 1, 64–67.

\bibitem{Kamada92}
S. Kamada, {\it Surfaces in $\mathbb{R}^4$ of braid index three are ribbon}, J. Knot Theory Ramifications {\bf 1} (1992), no. 2, 137--160.

\bibitem{Kamada-Matumoto}
S. Kamada, T. Matumoto, {\it Chart descriptions of regular braided surfaces}, Topology Appl. {\bf 230} (2017), 218--232.

\bibitem{Kawauchi}
A. Kawauchi, {\it A Survey of Knot Theory},
Birkh\"{a}user Verlag, Basel, 1996.

\bibitem{KSS}
A. Kawauchi, T. Shibuya, S. Suzuki, {\it Descriptions on surfaces in four-space. I. Normal forms}, Math. Sem. Notes Kobe Univ., {\bf 10} (1982), no. 1, 75–125.

\bibitem{Hilden1975}
H. M. Hilden, {\it Generators for two groups related to the braid group}, Pacific J. Math., {\bf 59} (1975), no. 2, 475–486.

\bibitem{Meier}
J. Meier, {\it Filling braided links with trisected surfaces}, Algebr. Geom. Topol., {\bf 24} (2024), no.2, 803--895.

\bibitem{Morton}
H. R. Morton, {\it A basis for the Birman-Wenzl algebra}, arXiv:1012.3116.

\bibitem{Murakami}
J. Murakami, {\it The Kauffman polynomial of links and representation theory}, Osaka J. Math. {\bf 24} (1987) 745--758.

\bibitem{N}
I. Nakamura, {\it Knotted surfaces constructed using generators of the BMW algebras and their graphical description}, arXiv: 2306.10479v1.

\bibitem{Rudolph}
L. Rudolph, {\it Braided surfaces and Seifert ribbons for closed braids}, Comment. Math. Helv. {\bf 58} (1983) 1--37.

\bibitem{Yasuda21}
J. Yasuda, {\it A plat form presentation for surface-links}, arXiv:2105.08634, to appear in Osaka J. Math. 

\end{thebibliography}
\end{document}